\newtheorem{thm}{Theorem}[section]
\newtheorem{lemma}[thm]{Lemma}
\newtheorem{prop}[thm]{Proposition}
\theoremstyle{definition}
\newtheorem{definition}{Definition}[section]
\newtheorem{rem}[definition]{Remark}
\newcommand{\p}{\partial}
\newcommand{\ds}{\displaystyle}
\newcommand{\Z}{\mathbb{Z}}
\newcommand{\srs}{s}
\newcommand{\srsf}{s}
\newcommand{\Srs}{S}
\newcommand{\Srsc}{\mathcal{S}}
\newcommand{\svdots}{\raisebox{0pt}{\scalebox{.6}{$\vdots$}}}
\newcommand{\sddots}{\raisebox{0pt}{\scalebox{.6}{$\ddots$}}}
\newcommand{\sadots}{\raisebox{0pt}{\scalebox{.6}{$\adots$}}}
\numberwithin{equation}{section}
\numberwithin{figure}{section}
\title{Space-Time Wave Localisation in Systems of Subwavelength Resonators}
\author{Habib Ammari\thanks{\footnotesize Department of Mathematics, ETH Z\"urich, R\"amistrasse 101, CH-8092 Z\"urich, Switzerland (habib.ammari@math.ethz.ch, liora.rueff@sam.math.ethz.ch).} \and Erik Orvehed Hiltunen\thanks{\footnotesize Department of Mathematics, University of Oslo, Moltke Moes vei 35, 0851 Oslo, Norway (erikhilt@math.uio.no).} \and Liora Rueff\footnotemark[1] }
\date{}
\begin{document}
	\maketitle
	\begin{abstract}
		In this paper we study the dynamics of metamaterials composed of high-contrast subwavelength resonators and  show the existence of localised modes in such a setting. A crucial assumption in this paper is time-modulated material parameters. We prove a so-called capacitance matrix approximation of the wave equation in the form of an ordinary differential equation. These formulas set the ground for the derivation of a first-principles characterisation of localised modes in terms of the generalised capacitance matrix. Furthermore, we provide numerical results supporting our analytical results showing for the first time the phenomenon of space-time localised waves in a perturbed time-modulated metamaterial. Such spatio-temporal localisation is only possible in the presence of subwavelength resonances in the unperturbed structure. We introduce the time-dependent degree of localisation to quantitatively determine the localised modes and provide a variety of numerical experiments to illustrate our formulations and results.
	\end{abstract}
	\noindent{\textbf{Mathematics Subject Classification (MSC2000):}} 35L51, 35P25, 35C20, 74J05.
	\vspace{0.2cm}\\
	\noindent{\textbf{Keywords:}} space-time localisation, time-dependent degree of localisation, metamaterial, subwavelength resonance, time-modulation, wave equation, band gap, momentum gap, defect mode.
	
	\section{Introduction}
	
	Metamaterials are materials with a repeating micro-structure that exhibit properties surpassing those of the individual building blocks \cite{kadic20193d}. Subwavelength resonators (with high-contrast material parameters and typical size much smaller than the operating wavelength) are used as the building blocks for such large, complex structures which can exhibit a variety of exotic and useful properties, such as localisation and transport properties. A variety of phenomena that can occur when the material parameters vary periodically in the spatial variable such as Dirac degeneracy, topologically protected edge and interface modes,  exceptional point degeneracy, Anderson localisation, Fano-resonance anomaly, bound state in the continuum and single and double near-zero effective properties have been mathematically studied; see \cite{ammari.davies.ea2021,cbms} and the references therein. 
	\par
	If the material parameters of the building blocks of metamaterials depend also periodically on time, there are conceptually similar phenomena since mathematically we can treat the time variable in a similar fashion as the spatial one. 
	Nevertheless, such phenomena have fundamentally different physical implications. 
	Temporal modulation extends the degree of freedom in wave control and manipulation at subwavelength scales and opens further possibilities for extraordinary wave propagation properties such as non-reciprocity and unidirectional amplification \cite{fink}. Structures based on these principles have emerged in the field of \emph{space--time metamaterials}, which represents one of the most active frontiers in the field of metamaterials. \par 
	Although very significant experimental and numerical modelling advances have been recently achieved in the field of time-modulated metamaterials (see, for instance, \cite{Sharabi_Lustig_Segev, GaliffiYinAlú+2022+3575+3581,Ramezani_Jha_Wang_Zhang,josa1,josa2}), very little is known from a mathematical point of view. As far as we know, approximation formulas for the subwavelength resonant frequencies corresponding to time-dependent subwavelength resonators have been derived in \cite{erik_JCP,jinghao_liora} and the properties of the scattered wave field emerging from a time-dependent material have been studied in \cite{ammari2024scattering,florian,erik_bryn_3dscattering}.\par
	
	In this work, we study the possibility of space-time localisation in systems of subwavelength resonators. We  specifically consider one dimensional  metamaterials consisting of subwavelength resonators with periodically time-dependent material parameters and, additionally, time-dependent sources on the resonator boundaries. Our model allows for more specific formulations and applications not only in the field of metamaterials, but also in condensed matter physics due to its analogies with quantum mechanical phenomena. This is due to the fact that, in contrast to higher dimensional cases, interactions between the resonators in one dimensional systems only imply the nearest neighbours. Thus, the capacitance matrix formalism in one dimension is analogous to the tight-binding approximation for quantum systems, while in three dimensions some correspondence holds only for dilute resonators \cite{francesco}.\par 
	
	Space-time localisation at subwavelength scales is a research direction of great scientific interest. It is of particular significance as it enables overcoming the diffraction limit \cite{ammari2023anderson,ammari_convrates,ammari2023spectralconvergencelargefinite}. This furnished a plethora of breakthroughs in the development of new nanotechnologies \cite{2016NatRM_Alu,Ma_Sheng_2016} and radiotherapy \cite{mdpi_ostyn}. Previous work focused on achieving localisation through the introduction of a defect in space, such as the geometry or the wave speed inside a specific resonator, which led to the emergence of space localisation. However, only very few research papers have been devoted to the derivation of time localisation \cite{THOMES_spacetimeloc}. In this paper, we will demonstrate the existence of time localised waves upon the introduction of a defect in time. Our ultimate goal is to combine defects in space and time to achieve space-time localised waves. Here, space-time localisation refers to a transition from a spatial delocalistion to spatial localisation, followed by a transition back to delocalisation.  We shall show numerically that the introduction of a defect in space and time provokes the emergence of space-time localised waves.\par
	Our approach relies on the \textit{capacitance matrix} for one dimensional infinite systems of high-contrast subwavelength resonators \cite{jinghao-silvio2023}, we derive an ordinary differential equation (ODE) formulation for the subwavelength resonant frequencies from first principles under the assumption that the material parameters are time-dependent. These formulas set the ground for an efficient numerical computation of the subwavelength resonant frequencies and the eigenmodes. We introduce the \textit{degree of localisation} as a quantity determining whether an eigenmode is localised. \par 
	This paper is structured as follows. We start by providing an overview of the geometrical setup of the one dimensional metamaterial and the definition of its time-dependent material parameters in Section \ref{sec:math_setup}. Opposed to \cite{jinghao_liora,ammari2024scattering} we assume the sources to be located on the resonator's boundaries instead of inside the resonators, which requires the derivation of new approximation formulas in the form of ODEs involving the so-called capacitance matrix in Section \ref{sec:cap_approx}. We then prove capacitance approximation formulas for time, space and space-time localisation in Section \ref{sec:loc_formulas}. In Section \ref{sec:numerical_exp} we show the hybridisation of the eigenmodes through the introduction of a defect in space or time. We also show the existence of space-time localised modes numerically. Lastly, we summarise our results in Section \ref{sec:concl}. 
	
	\section{Mathematical setup in the periodic structure}\label{sec:math_setup}
	We consider an infinite one dimensional metamaterial composed of an infinite array of the unit cell $Y:=(0,L)$, for some $L>0$, containing $N$ subwavelength resonators, sketched in \Cref{fig:setup}. In this setting, a resonator is an interval with time-modulated material parameters, embedded in a static background. We denote the resonators within the unit cell $Y$ by $D_i:=(x_i^-,x_i^+)$, for all $i=1,\dots,N$, each of length $\ell_i:=x_i^+-x_i^-$ and spacing $\ell_{i(i+1)}:=x_{i+1}^--x_i^+$ between $D_i$ and $D_{i+1}$. The strictly monotonically increasing series $(x_i^{\pm})_{1\leq i\leq N}$ defines the $2N$ boundary points of the resonators inside $Y$. This sequence can be extended to an infinite sequence $(x_i^{\pm})_{i\in\mathbb{Z}}$ defined by $x_{i+N}^{\pm}:=x_i^{\pm}+L$. In what follows, we denote by $D$ the disjoint union of resonators $D_i$, for $i=1,\dots,N$, inside $Y$. This allows us to introduce the notion of a lattice $\Lambda\subset\mathbb{R}$, defined as
	\begin{align}
		\Lambda:=\{mL:m\in\mathbb{Z}\}.
	\end{align}
	We index the cells by the integer $m\in\mathbb{Z}$ and the resonators inside the $m$-th cell by $D_i^m$, $\forall\,i=1,\dots,N$. Note that when we omit the superscript $m$, we refer to the central cell indexed by $m=0$. We refer to Figure \ref{fig:setup} for an illustration of the geometrical setup of the material.
	\begin{figure}[H]
		\centering
		\includegraphics[width=0.7\textwidth]{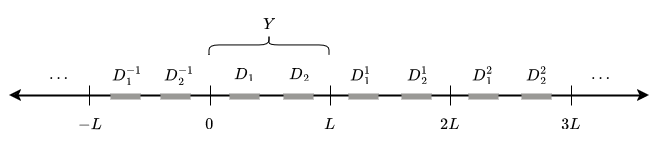}
		\caption{An illustration of the setup of an infinite material with $N=2$ resonators in the unit cell $Y$.}
		\label{fig:setup}
	\end{figure}\par 
	We suppose that there are two bulk material parameters $\kappa$ and $\rho$, given by
	\begin{align}\label{eq:kandrho}
		\kappa(x,t)=\begin{cases} 
			\kappa_0, & x \notin {D}; \\
			\kappa_{\mathrm{r}}^{i} \kappa_i(t), & x \in D_i^m,
		\end{cases}\quad \rho(x)=\begin{cases} 
			\rho_0, & x \notin {D}; \\
			\rho^i_{\mathrm{r}}, & x \in D_i^m,
		\end{cases}
	\end{align}
	for all $m\in\mathbb{Z}$. Note that if we omit the superscript $i$ in $\rho_{\mathrm{r}}^i$ and $\kappa_{\mathrm{r}}^i$, we assume that it has the same value for each $i=1,\dots,N$. Note also that we assume $\rho$ to be constant in $t$. In order to achieve spatio-temporal localisation, we additionally assume that the resonators have time-dependent boundary-layer sources $\srs_i(t)$ which enter the transmission boundary conditions across the boundary of $D$. We assume that $\kappa_i(t)$ and $\srs_i(t)$ are periodic functions with frequency $\Omega$ (and periodicity $T=2\pi/\Omega$). Furthermore, assume that their inverse have finite Fourier expansions given by
	\begin{align}\label{def:invrk_fourier}
		\frac{1}{\srs_i(t)}=\sum\limits_{n=-M}^M\srsf_{i,n}\mathrm{e}^{\mathrm{i}n\Omega t},\quad \frac{1}{\kappa_i(t)}=\sum\limits_{n=-M}^Mk_{i,n}\mathrm{e}^{\mathrm{i}n\Omega t}
	\end{align}
	for some $M\in\mathbb{N}$ such that $M=O\left(\delta^{-\gamma/2}\right)$, for some $\gamma\in(0,1)$ \cite{erik_JCP}.\par
	
	Let the contrast parameter $\delta$ and the wave speeds $v_0, v_{\mathrm{r}}$ be given by
	\begin{align}
		\delta:=\frac{\rho_{\mathrm{r}}}{\rho_0},\quad v_0:=\sqrt{\frac{\kappa_0}{\rho_0}},\quad v_{\mathrm{r}}:=\sqrt{\frac{\kappa_{\mathrm{r}}}{\rho_{\mathrm{r}}}},
	\end{align}
	respectively. Note that we can allow the wave speed $v_{\mathrm{r}}$ inside each resonator to differ. To attain subwavelength resonances, we enforce $\delta\ll1$, \textit{i.e.}, we consider that the subwavelength resonators are highly contrasting inclusions \cite{mcmpp}.\par 
	Let $u(x,t)$ be the wave field supported in the low-frequency regime and such that $u(x,t)\mathrm{e}^{-\mathrm{i}\omega t}$ is $T$-periodic with respect to time $t$ such that:
	\begin{equation}\label{eq:dD_system_u} 
		\left\{
		\begin{array} {ll}
			\ds \frac{\p^2}{\p t^2}u(x,t) - v_0^2\frac{\p^2}{\p x^2} u(x,t) = 0, \quad &x\notin D, \\[1em]
			\ds \frac{\p }{\p t } \frac{1}{\kappa_i(t)} \frac{\p}{\p t}u(x,t) - v_{\mathrm{r}}^2 \frac{\p^2}{\p x^2} u(x,t) = 0, \quad &x\in D_i, \quad i=1,\ldots,N,\\[1em]
			\ds u|_{\pm}(x_i^{\pm},t) = \frac{1}{\srs_i(t)}u|_{\mp}(x_i^{\pm},t), \quad &i=1,\dots,N, \\[1em] 	
			\ds \delta\frac{\p u}{\p x }\bigg|_{\pm}(x_i^{\pm},t) = \frac{1}{\srs_i(t)}\ds \frac{\p u}{\p x }\bigg|_{\mp}(x_i^{\pm},t), &i=1,\dots,N, \\[1em]
			u \text{ is an outgoing wave,}
		\end{array}		
		\right.
	\end{equation}
	where we use the notation
	\begin{align}
		\left.w\right|_{\pm}(x):=\lim_{h\rightarrow0,\,h>0}w(x\pm h).
	\end{align}\par 
	For a non-defected structure the wave field $u$ has the Fourier expansion
	\begin{align}
		u(x,t)=\mathrm{e}^{\mathrm{i}\omega t}\sum\limits_{n=-\infty}^{\infty}v_n(x)\mathrm{e}^{\mathrm{i}n\Omega t}.
	\end{align}
	Upon decomposing the $L^2$-functions $v_n(x)$ into their superposition of Bloch waves $\Hat{v}_n(x,\alpha)$ with momentum $\alpha\in Y^*:=(-\pi/L,\pi/L]$ \cite{jinghao_liora} and substituting this into \eqref{eq:dD_system_u}, we arrive at a boundary-value problem posed on the modes $v_n(x,\alpha):=\Hat{v}_n(x,\alpha)\mathrm{e}^{\mathrm{i}\alpha x}$ as follows:
	\begin{equation}\label{eq:dD_system_vn} 
		\left\{
		\begin{array} {ll}
			\frac{\mathrm{d}^2}{\mathrm{d}x^2}v_n+\left(k^{(n)}\right)^2v_n = 0, \quad &x\notin D, \\[1em]
			\frac{\mathrm{d}^2}{\mathrm{d}x^2}v_n+\left(k_{\mathrm{r}}^{(i,n)}\right)^2v_{i,n}^{**} = 0, \quad &x\in D_i, \quad i=1,\ldots,N,\\[1em]
			\ds v_n|_{\pm}(x_i^{\pm},\alpha) = v_{i,n}^*|_{\mp}(x_i^{\pm},\alpha), \quad &i=1,\dots,N, \\[1em] 	\ds \delta\frac{\p v_n}{\p x }\bigg|_{\pm}(x_i^{\pm},\alpha) = \frac{\p v_{i,n}^*}{\p x }\bigg|_{\mp}(x_i^{\pm},\alpha) , &i=1,\dots,N, 
		\end{array}		
		\right.
	\end{equation}
	where 
	\begin{equation}\label{def:conv_v}
		v_{i,n}^*(x,\alpha)=\sum_{m=-\infty}^{\infty} \srsf_{i,m} v_{n-m}(x,\alpha), \quad v_{i,n}^{* *}(x,\alpha)=\frac{1}{\omega+n \Omega} \sum_{m=-\infty}^{\infty} k_{i,m}(\omega+(n-m) \Omega) v_{n-m}(x,\alpha).
	\end{equation}
	Here, $\srsf_{i,n}$ and $k_{i,n}$ are the Fourier coefficients defined in \eqref{def:invrk_fourier}, and the $n$-th wave numbers outside and inside the resonators are given by
	\begin{equation}
		k^{(n)}:=\frac{\omega+n\Omega}{v_0},\quad k_{\mathrm{r}}^{(i,n)}:=\frac{\omega+n\Omega}{v_{\mathrm{r}}^{i}},
	\end{equation}
	respectively.
	
	\begin{rem}
		For the numerical experiments that follow, we assume the time-dependent material parameters $\kappa_i(t)$ and $\srs_i(t)$ to be given by 
		\begin{align}\label{def:rho_kappa}
			\kappa_i(t):=\frac{1}{1+\varepsilon_{\kappa}\cos\left(\Omega t+ \phi_{\kappa}^i\right)},\quad \srs_i(t):=\frac{1}{1+\varepsilon_{\srs}\cos\left(\Omega t+ \phi_{\srs}^i\right)},
		\end{align}
		with 
		\begin{align}
			\phi_{\kappa}^i=\phi_{\srs}^i=\begin{cases}
				\frac{\pi}{i-1},&i>0;\\
				0,&i=0.
			\end{cases}
		\end{align}
		In this case, the non-zero Fourier coefficients of their inverse are given by
		\begin{align}
			k_{i,-1}=\frac{\varepsilon_{\kappa}\mathrm{e}^{-\mathrm{i}\pi\phi^i_{\kappa}}}{2},\quad k_{i,0}=1,\quad k_{i,1}=\frac{\varepsilon_{\kappa}\mathrm{e}^{\mathrm{i}\pi\phi^i_{\kappa}}}{2},\\
			\srsf_{i,-1}=\frac{\varepsilon_{\srs}\mathrm{e}^{-\mathrm{i}\pi\phi^i_{\srs}}}{2},\quad \srsf_{i,0}=1,\quad \srsf_{i,1}=\frac{\varepsilon_{\srs}\mathrm{e}^{\mathrm{i}\pi\phi^i_{\srs}}}{2}.
		\end{align}
	\end{rem}
	\subsection{Physical Interpretation}
	Opposed to the governing equations used in previous works \cite{jinghao_liora,ammari2024scattering}, we assume that $\rho$ is constant in $t$ and we instead have sources $\srs$ located on the resonator's boundaries. This alters the transmission conditions posed on the boundaries $x_i^{\pm}$, $\forall\,i=1,\dots,N$, as in \eqref{eq:dD_system_u}. Our previously derived capacitance matrix approximation formulas based on \cite[Eq. (13)]{jinghao_liora} made it apparent that $\rho$ does not affect the band functions at leading order in $\delta$. However, with the newly introduced governing equations \eqref{eq:dD_system_u} we shall prove in Section \ref{sec:cap_approx} that both $\srs$ and $\kappa$ impact the band functions at leading order in $\delta$. This is desirable as it allows the formation of simultaneous frequency band gaps and momentum gaps.
	
	\subsection{Capacitance matrix approximation}\label{sec:cap_approx}
	We introduce the following operator \cite{jinghao_liora}:
	\begin{definition}[Operator $I_{\partial D}$]
		For simplicity of notation, for any smooth function $f:\mathbb{R}\rightarrow \mathbb{R}$, we define 
		\begin{equation}
			I_{\partial D_i}[f]:= \frac{\mathrm{d} f}{\mathrm{d} x}\bigg\vert_-(x_i^-)- \frac{\mathrm{d} f}{\mathrm{d} x}\bigg\vert_+(x_i^+).
		\end{equation}
	\end{definition}
	The following marks a key result for the derivation of the capacitance matrix approximation:
	\begin{lemma}\label{lemma:vn_const}
		As $\delta\to0$, $v_{n}(x,\alpha)\big\rvert_{(x_i^-,x_i^+)}=v_{i,n}+O(\delta^{(1-\gamma)/2})$, for fixed $\alpha\in Y^*$.
	\end{lemma}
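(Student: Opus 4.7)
The plan is to show, via integration of the interior equation combined with the transmission conditions in \eqref{eq:dD_system_vn}, that the variation of $v_n$ across each resonator $D_i$ is $O(\delta^{(1-\gamma)/2})$, so that $v_{i,n}$ can be taken to be the (leading-order) constant interior value, e.g.\ $v_{i,n} := v_n|_{+}(x_i^-,\alpha)$.

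First I would extract information about the inward normal derivatives of $v_n$ at the resonator boundaries. The transmission condition $\delta\,\partial_x v_n|_\pm(x_i^\pm,\alpha) = \partial_x v_{i,n}^*|_\mp(x_i^\pm,\alpha)$ together with the a priori boundedness of the exterior derivative (which follows from the outside Helmholtz equation with bounded wavenumbers and the outgoing/Bloch structure) shows that $\partial_x v_{i,n}^*|_\mp(x_i^\pm,\alpha) = O(\delta)$. Because $v_{i,n}^* = \sum_m \srsf_{i,m} v_{n-m}$ corresponds in the time variable to pointwise multiplication by $1/\srs_i(t)$, and since $\srs_i(t)$ is periodic, bounded, and bounded away from zero, this convolution-in-$n$ operator is invertible. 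Applying its inverse to the sequence of boundary derivatives then yields $\partial_x v_n|_+(x_i^-,\alpha) = O(\delta)$ and $\partial_x v_n|_-(x_i^+,\alpha) = O(\delta)$ for every mode $n$.

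Second, setting $v_{i,n} := v_n|_{+}(x_i^-,\alpha)$ and integrating the interior equation $v_n'' + (k_{\mathrm{r}}^{(i,n)})^2 v_{i,n}^{**} = 0$ twice from $x_i^-$ yields, for $x \in (x_i^-,x_i^+)$,
\begin{equation*}
v_n(x,\alpha) - v_{i,n} \;=\; v_n'|_{+}(x_i^-,\alpha)\,(x-x_i^-) \;-\; \int_{x_i^-}^x (x-y)\bigl(k_{\mathrm{r}}^{(i,n)}\bigr)^2 v_{i,n}^{**}(y,\alpha)\,\mathrm{d}y.
\end{equation*}
The first term is $O(\delta)$ by the previous step. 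For the second term, I would expand $v_{i,n}^{**}$ via \eqref{def:conv_v}, uniformly bound $|k_{i,m}|$, and use the truncation $M = O(\delta^{-\gamma/2})$ together with the subwavelength scaling $|\omega + n\Omega| = O(M\Omega)$. After applying a Cauchy--Schwarz or energy-type estimate on the convolution sum (which exploits the $O(\delta)$ smallness of the boundary flux established in the first step), the double integral is brought down to $O(\delta^{(1-\gamma)/2})$, completing the argument.

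The main obstacle is obtaining the precise exponent $(1-\gamma)/2$. Since $(k_{\mathrm{r}}^{(i,n)})^2$ can reach $O(M^2) = O(\delta^{-\gamma})$ for $|n|$ near the cutoff $M$, a termwise pointwise bound on $v_{i,n}^{**}$ would yield only $O(\delta^{-\gamma})$ for the double integral, worsening rather than controlling the error. The right estimate arises from balancing the large prefactor $\delta^{-\gamma}$ against the $O(\delta)$ smallness of the inward fluxes through an $L^2$-type bound on the convolution, producing the square-root improvement $\sqrt{\delta\cdot\delta^{-\gamma}} = \delta^{(1-\gamma)/2}$ that appears in the lemma. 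This delicate interplay between the small contrast $\delta$ and the Fourier truncation scale $M$ is the crux of the argument.
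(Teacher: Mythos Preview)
The paper does not give a self-contained proof of this lemma: it simply cites \cite[Lemma~4.1]{erik_JCP}. Your overall strategy---using the transmission condition to show the interior normal derivative is $O(\delta)$, then integrating the interior equation twice---is the natural one and is essentially what underlies the cited result.

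That said, there is a genuine gap in your proposal, and it is precisely the step you flag as ``the crux.'' The heuristic $\sqrt{\delta\cdot\delta^{-\gamma}}=\delta^{(1-\gamma)/2}$ is not an argument: the $O(\delta)$ smallness of the inward flux controls the \emph{first} term of your integral representation, while the factor $(k_{\mathrm r}^{(i,n)})^2$ appears in the \emph{second} term, and you give no mechanism by which an $L^2$ or Cauchy--Schwarz bound on $v_{i,n}^{**}$ would couple these two quantities into a geometric mean. As written, the double-integral term is not controlled at all.

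The missing ingredient is the subwavelength scaling of the modulation frequency. In the regime considered in \cite{erik_JCP} one has $\omega=O(\sqrt{\delta})$ and $\Omega=O(\sqrt{\delta})$, so that $\omega+n\Omega=O(M\sqrt{\delta})=O(\delta^{(1-\gamma)/2})$ for $|n|\le M$. This gives $(k_{\mathrm r}^{(i,n)})^2=O(\delta^{1-\gamma})$ directly---not $O(\delta^{-\gamma})$ as you wrote---and the double integral is then bounded by $O(\delta^{1-\gamma})$ once one has an $O(1)$ a~priori bound on $v_{i,n}^{**}$. Combined with the $O(\delta)$ first term, this yields the stated $O(\delta^{(1-\gamma)/2})$ (in fact slightly better on this part). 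In short, the exponent $(1-\gamma)/2$ comes from the product $M\Omega=O(\delta^{-\gamma/2})\cdot O(\delta^{1/2})$, not from any square-root balancing of flux against curvature. You should also make explicit the normalisation or a~priori estimate that guarantees the exterior derivative is $O(1)$ uniformly in $n$; without it the first step is circular.
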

	\begin{proof}
		See \cite[Lemma 4.1]{erik_JCP}.
	\end{proof}
	Moreover, we use the following expression for $v_n$:
	\begin{lemma}\label{lemma:vn_sum}
		As $\delta\to0$,
		\begin{align}
			v_n(x)=\sum\limits_{m=-\infty}^{\infty}\sum\limits_{j=1}^{N}V_i^{\alpha}(x)\srsf_{j,m}v_{j,n-m} +O(\delta^{(1-\gamma)/2}),\quad\forall\,x\in D_i,
		\end{align}
		where the functions $V_i^\alpha:\mathbb{R}\rightarrow \mathbb{R}$ are solutions to the following equations:
		\begin{equation}\label{def:Vialpha}
			\begin{cases}-\frac{\mathrm{d}^2}{\mathrm{d}x^2} V_i^\alpha=0, & x\in(0,L) \backslash D, \\ V_i^\alpha(x)=\delta_{i j}, & x \in D_j, \\ V_i^\alpha(x+m L)=\mathrm{e}^{\mathrm{i} \alpha m L} V_i^\alpha(x), & m \in \mathbb{Z}.\end{cases}
		\end{equation} 
	\end{lemma}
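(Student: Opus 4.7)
The plan is to combine \Cref{lemma:vn_const} with the first transmission condition of \eqref{eq:dD_system_vn} to extract the exterior boundary data of $v_n$ on $\partial D$, and then to identify $v_n$ outside $D$ as the solution of a Bloch-periodic Laplace problem at leading order in $\delta$.

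First I would use the transmission condition $v_n|_\pm(x_j^\pm,\alpha)=v_{j,n}^*|_\mp(x_j^\pm,\alpha)$ together with the definition \eqref{def:conv_v} of $v_{j,n}^*$ to rewrite the exterior trace of $v_n$ on $\partial D_j$ as $\sum_m \srsf_{j,m} v_{n-m}|_\mp(x_j^\pm,\alpha)$. Applying \Cref{lemma:vn_const} to each term on the interior side — which is legitimate because the Fourier truncation $M=O(\delta^{-\gamma/2})$ in \eqref{def:invrk_fourier} keeps only finitely many $\srsf_{j,m}$ nonzero — this trace becomes $\sum_m \srsf_{j,m} v_{j,n-m}+O(\delta^{(1-\gamma)/2})$, furnishing Dirichlet data on the outside of each resonator.

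Second, I would argue that outside $D$ the Helmholtz contribution $(k^{(n)})^2 v_n$ in \eqref{eq:dD_system_vn} is negligible at leading order in $\delta$, so that $v_n$ is, up to $O(\delta^{(1-\gamma)/2})$, the unique Bloch-periodic harmonic extension of the Dirichlet data just computed. Since the functions $V_j^\alpha$ defined in \eqref{def:Vialpha} form a basis for such extensions — piecewise harmonic on $Y\setminus D$, $\alpha$-quasi-periodic, and $V_j^\alpha=\delta_{ji}$ on $D_i$ — linearity and uniqueness yield the representation
\begin{equation*}
v_n(x,\alpha)=\sum_{j=1}^{N}\sum_{m=-\infty}^{\infty}V_j^\alpha(x)\,\srsf_{j,m}\,v_{j,n-m}+O(\delta^{(1-\gamma)/2}).
\end{equation*}
Restricting to $x\in D_i$, where $V_j^\alpha(x)=\delta_{ji}$, together with a final application of the transmission condition, then recovers the stated formula.

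The main obstacle is the quantitative justification of the Helmholtz-to-Laplace reduction outside $D$, uniformly over the Fourier modes retained by the truncation. Since $k^{(n)}=(\omega+n\Omega)/v_0$ need not be small when $|n|$ approaches $M=O(\delta^{-\gamma/2})$, one cannot simply set $k^{(n)}=0$; a quantitative bound on the quasi-periodic Helmholtz Green's function is needed to guarantee that $(k^{(n)})^2 v_n$ contributes an error of at most $O(\delta^{(1-\gamma)/2})$, matching the error already built into \Cref{lemma:vn_const}. Once this bound is in place, the rest of the argument is simply the linear algebra of reading off the coefficients of $v_n$ in the $V_j^\alpha$-basis from the boundary data.
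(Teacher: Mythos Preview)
Your outline is correct and is precisely the standard argument that underlies the cited result: the paper does not give an independent proof here but simply states that ``the result is an immediate generalisation of \cite[Lemma 5.2]{jinghao_liora} and its proof,'' and the argument in that reference proceeds exactly as you describe --- use \Cref{lemma:vn_const} together with the transmission condition to obtain constant exterior Dirichlet data $\sum_m \srsf_{j,m} v_{j,n-m}$ on $\partial D_j$, reduce the exterior Helmholtz problem to the Laplace problem in the subwavelength regime, and read off the coefficients in the $V_j^\alpha$ basis. Your identification of the Helmholtz-to-Laplace reduction as the only substantive analytic step, and the reason the truncation $M=O(\delta^{-\gamma/2})$ enters the error exponent, is also on the mark; note that in this regime both $\omega$ and $\Omega$ scale like $O(\sqrt{\delta})$, so $k^{(n)}=O(\delta^{(1-\gamma)/2})$ for $|n|\le M$, which is what makes the reduction uniform. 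One small remark: the expansion you derive in fact holds for all $x$ (in particular on the exterior side of $\partial D$), which is what is actually used when applying $I_{\partial D_i}$ in \eqref{eq:Ivn_cap}; the restriction ``$\forall x\in D_i$'' in the lemma statement and the index $i$ on $V_i^\alpha$ (rather than $V_j^\alpha$) appear to be typographical slips in the paper.
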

	\begin{proof}
		The result is an immediate generalisation of \cite[Lemma 5.2]{jinghao_liora} and its proof.
	\end{proof}
	Applying the operator $I_{\partial D_i}$ to $v_n$ and using the transmission condition at the boundaries allows us to arrive at
	\begin{align}\label{eq:Ivn_**}
		I_{\partial D_i}[v_n]&=-\frac{1}{\delta}\int_{x_i^-}^{x_i^+}\frac{\mathrm{d}^2}{\mathrm{d}x^2}v_{i,n}^*(x,\alpha)\,\mathrm{d}x\nonumber\\
		&=-\frac{1}{\delta}\sum\limits_{m=-\infty}^{\infty}\srsf_{i,m}\int_{x_i^-}^{x_i^+}\frac{\mathrm{d}^2}{\mathrm{d}x^2}v_{n-m}(x,\alpha)\,\mathrm{d}x\nonumber\\
		&=\frac{\rho_{\mathrm{r}}}{\delta\kappa_{\mathrm{r}}}\sum\limits_{m=-\infty}^{\infty}\srsf_{i,m}(\omega+(n-m)\Omega)^2\int_{x_i^-}^{x_i^+}v_{i,n-m}^{**}(x,\alpha)\,\mathrm{d}x.
	\end{align}
	On the other hand, Lemma \ref{lemma:vn_sum} leads to the introduction of the capacitance matrix coefficients as follows:
	\begin{align}\label{eq:Ivn_cap}
		I_{\partial D_i}[v_n]=\sum\limits_{m=-\infty}^{\infty}\sum\limits_{j=1}^N\srsf_{j,m}v_{j,n-m}C_{ij}^{\alpha},
	\end{align}
	where the capacitance matrix is given by \cite{jinghao-silvio2023}
	\begin{align}
		C^{\alpha}:=\left(C_{ij}^{\alpha}\right)_{i,j=1,\dots,N},\quad C_{ij}^{\alpha}:=I_{\partial D_i}[V_j^{\alpha}].
	\end{align}
	Equating \eqref{eq:Ivn_cap} and \eqref{eq:Ivn_**}, we conclude that
	\begin{align}\label{eq:pre_capapprox}
		\sum\limits_{m=-\infty}^{\infty}\sum\limits_{j=1}^N\srsf_{j,m}v_{j,n-m}C_{ij}^{\alpha}=\frac{\rho_{\mathrm{r}}}{\delta\kappa_{\mathrm{r}}}\sum\limits_{l=-\infty}^{\infty}\srsf_{i,n-l}(\omega+l\Omega)\sum\limits_{m=-\infty}^{\infty}k_{i,l-m}(\omega+m\Omega)\int_{x_i^-}^{x_i^+}v_l(x,\alpha)\,\mathrm{d}x,
	\end{align}
	for fixed $\alpha\in Y^*$ and $n\in\mathbb{Z}$. Next, we use that for fixed $\alpha$
	\begin{align}
		\int_{x_i^-}^{x_i^+}v_n(x,\alpha)\,\mathrm{d}x\approx\ell_iv_{i,n}.
	\end{align}
	By taking the inverse transform of \eqref{eq:pre_capapprox} we get
	\begin{align}\label{eq:ODEperiodic}
		\sum\limits_{j=1}^NC_{ij}^{\alpha}w_j(t)=-\frac{\ell_i\rho_{\mathrm{r}}}{\delta\kappa_{\mathrm{r}}\srs_i(t)}\frac{\mathrm{d}}{\mathrm{d}t}\left(\frac{1}{\kappa_i(t)}\frac{\mathrm{d}}{\mathrm{d}t}\srs_i(t)w_i(t)\right),
	\end{align}
	where 
	\begin{align}
		w_i(t):=\frac{c_i(t)}{\srs_i(t)},\quad c_i(t):=\mathrm{e}^{\mathrm{i}\omega t}\sum\limits_{n=-\infty}^{\infty}v_{i,n}\mathrm{e}^{\mathrm{i}n\Omega t}.
	\end{align}
	This proves the following result:
	\begin{thm}
		As $\delta\to0$, the quasifrequencies in the subwavelength regime are, at leading order, given by the quasifrequencies of the ODE
		\begin{align}\label{eq:capmat_ode}
			M^{\alpha}(t)\Psi(t)+\Psi^{''}(t)=0,
		\end{align}
		where $M^{\alpha}(t)=\frac{\delta\kappa_{\mathrm{r}}}{\rho_{\mathrm{r}}}W_1(t)C^{\alpha}W_2(t)+W_3(t)$ with $W_1,\,W_2,\,W_3$ being diagonal matrices defined as
		\begin{align}
			(W_1)_{ii}=\frac{\sqrt{\kappa_i}\srs_i}{\ell_i},\quad (W_2)_{ii}=\frac{\sqrt{\kappa_i}}{\srs_i},\quad (W_3)_{ii}=\frac{\sqrt{\kappa_i}}{2}\frac{\mathrm{d}}{\mathrm{d}t}\frac{\kappa_i^{'}}{\kappa_i^{3/2}},
		\end{align}
		for $i=1,\dots,N$.
	\end{thm}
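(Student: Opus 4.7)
The plan is to derive the matrix ODE \eqref{eq:capmat_ode} directly from the component-wise ODE \eqref{eq:ODEperiodic} that has just been established. Since \eqref{eq:ODEperiodic} involves the first-order operator $\frac{d}{dt}(\frac{1}{\kappa_i}\frac{d}{dt}(\cdot))$ acting on $\srs_i w_i = c_i$, the natural strategy is a Liouville-type substitution that eliminates the spurious first-derivative term and converts the expression into a plain $\Psi_i''$ plus a potential.

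Concretely, I would set $\Psi_i(t) := c_i(t)/\sqrt{\kappa_i(t)}$, so that $\srs_i(t) w_i(t) = c_i(t) = \sqrt{\kappa_i(t)}\,\Psi_i(t)$ and $w_j(t) = (\sqrt{\kappa_j(t)}/\srs_j(t))\,\Psi_j(t) = (W_2)_{jj}\Psi_j(t)$. A short computation — write $\frac{1}{\kappa_i}\frac{d}{dt}(\sqrt{\kappa_i}\Psi_i) = \frac{\kappa_i'}{2\kappa_i^{3/2}}\Psi_i + \frac{1}{\sqrt{\kappa_i}}\Psi_i'$, differentiate once more, and observe that the cross-term $\frac{\kappa_i'}{2\kappa_i^{3/2}}\Psi_i'$ is cancelled exactly by $(\frac{1}{\sqrt{\kappa_i}})'\,\Psi_i' = -\frac{\kappa_i'}{2\kappa_i^{3/2}}\Psi_i'$ — yields the identity
\begin{align*}
\frac{d}{dt}\left(\frac{1}{\kappa_i(t)}\frac{dc_i(t)}{dt}\right) = \frac{1}{\sqrt{\kappa_i(t)}}\,\Psi_i''(t) + \frac{1}{2}\frac{d}{dt}\!\left(\frac{\kappa_i'(t)}{\kappa_i(t)^{3/2}}\right)\Psi_i(t).
\end{align*}
The exponent $1/2$ is precisely what makes the coefficient of $\Psi_i'$ vanish, and the residual $\Psi_i$-coefficient is exactly $(W_3)_{ii}/\sqrt{\kappa_i}$.

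Next, I substitute this identity and the expression $w_j = (W_2)_{jj}\Psi_j$ into \eqref{eq:ODEperiodic}, then multiply both sides by the scalar $-\frac{\delta\kappa_{\mathrm{r}}}{\rho_{\mathrm{r}}}\cdot\frac{\sqrt{\kappa_i(t)}\,\srs_i(t)}{\ell_i} = -\frac{\delta\kappa_{\mathrm{r}}}{\rho_{\mathrm{r}}}(W_1)_{ii}$. The left-hand side rearranges to $-\frac{\delta\kappa_{\mathrm{r}}}{\rho_{\mathrm{r}}}(W_1 C^\alpha W_2 \Psi)_i$, while the right-hand side collapses to $\Psi_i''(t) + (W_3)_{ii}\Psi_i(t)$. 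Reading this as the $i$-th component of a vector equation yields $M^\alpha(t)\Psi(t) + \Psi''(t) = 0$ with $M^\alpha = \frac{\delta\kappa_{\mathrm{r}}}{\rho_{\mathrm{r}}} W_1 C^\alpha W_2 + W_3$, as claimed.

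The only delicate step is the Liouville manipulation: one must choose the correct gauge $\sqrt{\kappa_i}$ and keep careful track of the product rule so that the intermediate first-derivative terms cancel and the residual potential is recognised in the form $\frac{1}{2}\frac{d}{dt}(\kappa_i'/\kappa_i^{3/2})$. Everything else is purely algebraic scaling designed to absorb the prefactors $\ell_i,\,\srs_i,\,\sqrt{\kappa_i},\,\rho_{\mathrm{r}}/\kappa_{\mathrm{r}}$ into the diagonal matrices $W_1, W_2, W_3$ of the statement.
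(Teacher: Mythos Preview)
Your proposal is correct and matches the paper's approach: the paper derives \eqref{eq:ODEperiodic} and then simply states ``This proves the following result,'' leaving the passage to \eqref{eq:capmat_ode} implicit; you have supplied exactly the missing Liouville substitution $\Psi_i=c_i/\sqrt{\kappa_i}$ and the algebraic rescaling that the paper takes for granted. Your identification of the cancellation of the $\Psi_i'$-terms and the resulting potential $(W_3)_{ii}$ is accurate, and the final rearrangement into $M^\alpha\Psi+\Psi''=0$ is correct.
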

	
	\subsection{Real-space capacitance formulation}\label{sec:realspace}
	The results of the previous section, which were posed in terms of the dual (Floquet-Bloch) variable $\alpha\in Y^*$, can be rephrased in terms of the real-space variable $m\in \Lambda$. We introduce the one dimensional Floquet-Bloch transform $\mathcal{I}:\left(\ell^2\left(\Lambda\right)\right)^N\to\left(L^2\left(Y^*\right)\right)^N$ and its inverse $\mathcal{I}^{-1}:\left(L^2\left(Y^*\right)\right)^N\to\left(\ell^2\left(\Lambda\right)\right)^N$, which are given by
	\begin{align}
		\mathcal{I}[f](\alpha):=\sum\limits_{m\in\Lambda}f(m)\mathrm{e}^{\mathrm{i}\alpha m},\qquad \mathcal{I}^{-1}[F](m):=\frac{L}{2\pi}\int_{-\pi/L}^{\pi/L}F(\alpha)\mathrm{e}^{-\mathrm{i}\alpha m}\,\mathrm{d}\alpha,
	\end{align}
	respectively. The following multiplication-rule for the inverse Floquet-Bloch transform is a basic application of the convolution theorem; for completeness, we include the proof as the result will be applied.
	\begin{lemma}\label{lemma:multip}
		Let $F,G\in \left(L^2\left(Y^*\right)\right)^N$. Then,
		\begin{align}
			\mathcal{I}^{-1}[FG](m)=\sum\limits_{n\in\Lambda}\mathcal{I}^{-1}[F](n)\mathcal{I}^{-1}[G](m-n).
		\end{align}
	\end{lemma}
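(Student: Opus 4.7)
The plan is to expand the right-hand side using the definition of $\mathcal{I}^{-1}$ and then recognise the resulting lattice sum as a Dirac comb that collapses the double integral. Writing
\begin{align}
\sum_{n\in\Lambda}\mathcal{I}^{-1}[F](n)\mathcal{I}^{-1}[G](m-n)
&=\left(\frac{L}{2\pi}\right)^{\!2}\sum_{n\in\Lambda}\int_{-\pi/L}^{\pi/L}\!\!\int_{-\pi/L}^{\pi/L} F(\alpha)G(\beta)\,\mathrm{e}^{-\mathrm{i}\alpha n}\mathrm{e}^{-\mathrm{i}\beta(m-n)}\,\mathrm{d}\alpha\,\mathrm{d}\beta,
\end{align}
I would interchange the sum and the integrals and group the $n$-dependent factors to obtain the lattice sum $\sum_{n\in\Lambda}\mathrm{e}^{-\mathrm{i}(\alpha-\beta)n}$. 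By Poisson summation this equals $\frac{2\pi}{L}\sum_{j\in\mathbb{Z}}\delta(\alpha-\beta-2\pi j/L)$, and since $\alpha,\beta\in Y^*=(-\pi/L,\pi/L]$ only the $j=0$ term contributes. The resulting $\delta(\alpha-\beta)$ then collapses the double integral to $\frac{L}{2\pi}\int_{-\pi/L}^{\pi/L} F(\beta)G(\beta)\mathrm{e}^{-\mathrm{i}\beta m}\,\mathrm{d}\beta=\mathcal{I}^{-1}[FG](m)$, which is exactly the desired identity.

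An equivalent, and perhaps more self-contained, route that avoids invoking distributions is to apply the forward transform $\mathcal{I}$ to both sides and use $\mathcal{I}\circ\mathcal{I}^{-1}=\mathrm{Id}$. Set $f:=\mathcal{I}^{-1}[F]$ and $g:=\mathcal{I}^{-1}[G]$. Then
\begin{align}
\mathcal{I}\!\left[\sum_{n\in\Lambda}f(n)g(\cdot-n)\right]\!(\alpha)
=\sum_{m\in\Lambda}\sum_{n\in\Lambda}f(n)g(m-n)\,\mathrm{e}^{\mathrm{i}\alpha m}
=\sum_{n\in\Lambda}f(n)\mathrm{e}^{\mathrm{i}\alpha n}\sum_{k\in\Lambda}g(k)\mathrm{e}^{\mathrm{i}\alpha k}=F(\alpha)G(\alpha),
\end{align}
after substituting $k=m-n$ and using that the shift leaves $\Lambda$ invariant. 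Applying $\mathcal{I}^{-1}$ to both ends then yields the claim.

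The main obstacle in either approach is the interchange of the lattice sum with the integral (equivalently, with the other lattice sum in the second approach). The cleanest justification is to first establish the identity for $F,G$ in a dense subspace of $\left(L^2(Y^*)\right)^N$ on which all sums are finite, for instance trigonometric polynomials $F(\alpha)=\sum_{|n|\le N_0}a_n\mathrm{e}^{\mathrm{i}\alpha n}$, where Fubini applies trivially. Since $\mathcal{I}^{-1}$ is (up to the factor $L/2\pi$) an isometry between $\left(L^2(Y^*)\right)^N$ and $\left(\ell^2(\Lambda)\right)^N$, both sides of the claimed identity depend continuously on $(F,G)$ in the appropriate topology, so the identity extends from trigonometric polynomials to the whole of $\left(L^2(Y^*)\right)^N$ by density. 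I would relegate this density argument to a single sentence, since the algebraic content lies entirely in the Poisson/shift-invariance computation above.
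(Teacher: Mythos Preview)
Your proposal is correct. Your second route is essentially the paper's argument run in the reverse direction: the paper starts from $\mathcal{I}^{-1}[FG](m)$, expands $F$ via its Fourier series $F(\alpha)=\sum_{n\in\Lambda}\mathcal{I}^{-1}[F](n)\mathrm{e}^{\mathrm{i}\alpha n}$, substitutes into the integral, and recognises the remaining integral as $\mathcal{I}^{-1}[G](m-n)$; you instead apply $\mathcal{I}$ to the convolution and recover $FG$. Both rely on the same identity $\mathcal{I}\circ\mathcal{I}^{-1}=\mathrm{Id}$ and the same interchange of sum and integral. Your first route via the Dirac comb and Poisson summation is also valid but brings in more machinery than needed; the paper's direct substitution avoids distributions entirely and is a few lines shorter. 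Your care about justifying the interchange via a density argument is more scrupulous than the paper, which simply performs the swap without comment.
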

	\begin{proof}
		Firstly, note that we can write
		\begin{align}
			F(\alpha)=\sum\limits_{n\in\Lambda}\mathcal{I}^{-1}[F](n)\mathrm{e}^{\mathrm{i}\alpha n}.
		\end{align}
		Inserting the above expression into the definition of $\mathcal{I}^{-1}$ allows us to conclude the following:
		\begin{align}
			\mathcal{I}^{-1}[FG](m)&=\frac{L}{2\pi}\int_{-\pi/L}^{\pi/L}\sum\limits_{n\in\Lambda}\mathcal{I}^{-1}[F](n)\mathrm{e}^{\mathrm{i}\alpha n}G(\alpha)\mathrm{e}^{-\mathrm{i}\alpha m}\,\mathrm{d}\alpha\nonumber\\
			&=\sum\limits_{n\in\Lambda}\mathcal{I}^{-1}[F](n)\frac{L}{2\pi}\int_{-\pi/L}^{\pi/L}G(\alpha)\mathrm{e}^{-\mathrm{i}\alpha(m-n)}\,\mathrm{d}\alpha\nonumber\\
			&=\sum\limits_{n\in\Lambda}\mathcal{I}^{-1}[F](n)\mathcal{I}^{-1}[G](m-n),
		\end{align}
		which proves the result.
	\end{proof}
	Based on the quasiperiodic capacitance matrix $C^\alpha, \alpha \in Y^*$, we now define the real-space capacitance matrix $C^m, \ m \in \Z$ as $C^m = \mathcal{I}^{-1}[C^\alpha](m)$.
	
	\begin{lemma}\label{lemma:Cm_realspace}
		The real-space capacitance matrix $C^m, m \in \Z,$ corresponding to periodically reoccurring $N$ resonators is given by 
		\begin{align}
			\left(C^m\right)_{ij}=\begin{cases}
				\left(\frac{1}{\ell_{(i-1)i}}+\frac{1}{\ell_{i(i+1)}}\right)\frac{L}{m\pi}\sin\left(\frac{\pi m}{L}\right),&i=j;\\[1em]
				-\frac{1}{\ell_{ij}}\frac{L}{m\pi}\sin\left(\frac{\pi m}{L}\right),&j=i+1;\\[1em]
				-\frac{1}{\ell_{ji}}\frac{L}{m\pi}\sin\left(\frac{\pi m}{L}\right),&i=j+1;\\[1em]
				-\frac{1}{\ell_{N(N+1)}}\frac{L}{\pi(m+L)}\sin\left(\frac{\pi(m+L)}{L}\right),&i=1,j=N;\\[1em]
				-\frac{1}{\ell_{N(N+1)}}\frac{L}{\pi(m-L)}\sin\left(\frac{\pi(m-L)}{L}\right),&i=N,j=1.
			\end{cases}
		\end{align}
	\end{lemma}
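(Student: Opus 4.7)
The plan is to first compute the quasiperiodic capacitance matrix $C^\alpha$ in closed form and then apply the inverse Floquet--Bloch transform entry by entry to obtain $C^m$. The computation of $C^\alpha$ exploits the fact that, by \eqref{def:Vialpha}, each $V_j^\alpha$ is harmonic (hence affine) on every gap $(x_i^+,x_{i+1}^-)$ of $(0,L)\setminus D$, takes the value $\delta_{ij}$ on each resonator $D_i$ in the central cell, and is extended by the phase $\mathrm{e}^{\mathrm{i}\alpha m L}$ to neighbouring cells. From this, one reads off the slopes of $V_j^\alpha$ in the gaps adjacent to $x_i^{\pm}$ and then evaluates $I_{\partial D_i}[V_j^\alpha]=V_j^\alpha{}'|_-(x_i^-)-V_j^\alpha{}'|_+(x_i^+)$.

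\textbf{Step 1 (computing $C^\alpha$).} A direct slope calculation gives
\begin{align*}
C^\alpha_{ii} &= \frac{1}{\ell_{(i-1)i}}+\frac{1}{\ell_{i(i+1)}},\qquad
C^\alpha_{i,i+1}=C^\alpha_{i+1,i} = -\frac{1}{\ell_{i(i+1)}}\quad(1\le i\le N-1),\\
C^\alpha_{1N} &= -\frac{\mathrm{e}^{-\mathrm{i}\alpha L}}{\ell_{N(N+1)}},\qquad
C^\alpha_{N1} = -\frac{\mathrm{e}^{\mathrm{i}\alpha L}}{\ell_{N(N+1)}},
\end{align*}
with all other entries vanishing. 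The only subtle points here are the $(1,N)$ and $(N,1)$ entries, where one must track the quasiperiodic boundary condition: $V_N^\alpha$ takes the value $\mathrm{e}^{-\mathrm{i}\alpha L}$ on $D_N$ shifted into the $m=-1$ cell (the cell immediately to the left of $D_1$), and similarly $V_1^\alpha$ takes the value $\mathrm{e}^{\mathrm{i}\alpha L}$ on $D_1$ in the $m=1$ cell. This yields the phases $\mathrm{e}^{\mp \mathrm{i}\alpha L}$ in the two off-corner entries.

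\textbf{Step 2 (inverting the transform).} Each entry of $C^\alpha$ is either a constant in $\alpha$ or a constant multiple of $\mathrm{e}^{\pm \mathrm{i}\alpha L}$, so the inverse Floquet--Bloch transform reduces to the elementary integral
\begin{equation*}
\mathcal{I}^{-1}[\mathrm{e}^{\mathrm{i}\alpha q}](m)=\frac{L}{2\pi}\int_{-\pi/L}^{\pi/L}\mathrm{e}^{-\mathrm{i}\alpha(m-q)}\,\mathrm{d}\alpha=\frac{L}{(m-q)\pi}\sin\!\left(\frac{\pi(m-q)}{L}\right).
\end{equation*}
Taking $q=0$ for the tridiagonal entries, $q=-L$ for the $(1,N)$ corner, and $q=L$ for the $(N,1)$ corner and multiplying by the appropriate constants yields exactly the five cases in the claim.

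\textbf{Main obstacle.} There is no conceptual difficulty; the only care required is in Step 1, keeping track of which gap borders which resonator and which quasiperiodic translate of $V_j^\alpha$ contributes to the slope at the corner endpoints $x_1^-$ and $x_N^+$. Once $C^\alpha$ is written down correctly, Step 2 is a one-line Fourier integral applied entrywise.
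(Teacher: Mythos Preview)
Your proposal is correct and follows exactly the approach the paper takes: the paper's proof consists of the single sentence ``The definition of $C^m$ can be achieved by applying the inverse Floquet--Bloch transform $\mathcal{I}^{-1}$ to $C^{\alpha}$,'' and you have simply spelled out this computation in detail by first writing $C^\alpha$ explicitly and then evaluating the Fourier integral entrywise.
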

	\begin{proof}
		The definition of $C^m$ can be achieved by applying the inverse Floquet-Bloch transform $\mathcal{I}^{-1}$ to $C^{\alpha}$.
	\end{proof}
	In the static case, the subwavelength resonant frequencies $\omega_i^{\alpha}$ and the corresponding eigenmodes $\mathbf{v}_i^{\alpha}$ are given by the eigenvalues and eigenvector pairs of
	\begin{align}
		\frac{\delta\kappa_{\mathrm{r}}}{\rho_{\mathrm{r}}}\mathcal{L}C^{\alpha}\mathbf{v}=\omega \mathbf{v},
	\end{align}
	where $\mathcal{L}:=\mathrm{diag}\left(1/\ell_i\right)_{i=1,\dots,N}$.
	\begin{definition}[Generalised Capacitance Matrix]
		We define the generalised capacitance matrix to be given by the following matrix product: \begin{align}
			\mathcal{C}^{\alpha}:=\frac{\delta\kappa_{\mathrm{r}}}{\rho_{\mathrm{r}}}\mathcal{L}C^{\alpha}.
		\end{align}
	\end{definition}
	Before proceeding with the formulation of a real--space capacitance matrix approximation, we must introduce some new notation. We define the column-vectors $\mathbf{v}^m_n:=\left[v_{i,n}^m\right]_{i=1}^N\in\mathbb{R}^N$, and we define the diagonal matrices 
	\begin{align}
		\Srs(t):=\mathrm{diag}\left(\srs_i(t)\right)_{i=1}^N,\quad K(t)=\mathrm{diag}\left(\kappa_i(t)\right)_{i=1}^N.
	\end{align}
	Equation \eqref{eq:ODEperiodic} can now be equivalently expressed as a vector ODE with the infinite matrices
	\begin{align}
		\mathcal{C}:=\begin{bsmallmatrix}\sddots & \svdots & \svdots & \svdots & \svdots & \sadots \\
			\cdots & \mathcal{C}^0 & \mathcal{C}^1 & \mathcal{C}^2 & \mathcal{C}^3 & \cdots \\
			\cdots & \mathcal{C}^{-1} & \mathcal{C}^0 & \mathcal{C}^1 & \mathcal{C}^2 & \cdots \\
			\cdots & \mathcal{C}^{-2} & \mathcal{C}^{-1} & \mathcal{C}^0 & \mathcal{C}^1 & \cdots \\
			\cdots & \mathcal{C}^{-3} & \mathcal{C}^{-2} & \mathcal{C}^{-1} & \mathcal{C}^0 & \cdots \\
			\sadots & \svdots & \svdots & \svdots & \svdots & \sddots\end{bsmallmatrix}, \quad	\Srsc(t):=\begin{bsmallmatrix}
			\sddots & 0 & & \\
			0 & \Srs(t) & 0 & \\
			& 0 & \sddots &
		\end{bsmallmatrix},& \quad \mathcal{K}(t):=\begin{bsmallmatrix}
			\sddots & 0 & & \\
			0 & K(t) & 0 & \\
			& 0 & \sddots &
		\end{bsmallmatrix},
	\end{align}
	and infinite vector
	\begin{align}
		\mathbf{v}(t):=\begin{bsmallmatrix}
			\\\svdots \\ \mathbf{v}^{-1}(t) \\ \mathbf{v}^{0}(t) \\ \mathbf{v}^{1}(t) \\ \svdots\\
		\end{bsmallmatrix},\quad \mathbf{v}^m(t):=\sum\limits_{k\in\mathbb{Z}}\mathbf{v}^m_k\mathrm{e}^{\mathrm{i}(\omega+k\Omega)t}
	\end{align}
	as follows:
	\begin{align}\label{eq:v_ODE_perioidc}
		\mathcal{C}^{\alpha}\Srsc^{-1}(t)\mathbf{v}(t)=\Srsc^{-1}(t)\frac{\mathrm{d}}{\mathrm{d}t}\left(\mathcal{K}^{-1}(t)\frac{\mathrm{d}}{\mathrm{d}t}\mathbf{v}(t)\right).
	\end{align}
	As we shall see, this real-space representation will be advantageous when treating material defects in \Cref{sec:loc_formulas}.
	
	\subsection{Band gaps and momentum gaps}
	We seek to analyse the so-called band structure of the material, which describes the quasifrequency-to-momentum relationship of the propagating waves \cite{jinghao2}.
	\begin{definition}[Band Gap]
		A band gap is defined to be the regime of subwavelength frequencies $\omega$ with which waves are unable to propagate through the medium, but decay exponentially instead \cite{jinghao2}. Mathematically speaking, this means that, for some $1\leq i\leq N-1$,
		\begin{align}
			\max\limits_{\alpha\in Y^*}\omega_i^{\alpha} < \min\limits_{\alpha\in Y^*}\omega_{i+1}^{\alpha}.
		\end{align}
	\end{definition}
	\begin{definition}[Momentum Gap]
		A momentum gap is defined to be a band gap in the momentum variable $\alpha \in Y^*$ \cite{erik_JCP}.
	\end{definition}
	Our numerical experiments show that modulating $\kappa$ in time provokes the formation of momentum gaps and modulating $\srs$ in time provokes the formation of band gaps. In Figure \ref{fig:bandfct_kbgap} we show that by modulating $\srs$ and $\kappa$ in time, we can achieve the formation of band and momentum gaps at the same time.
	\begin{figure}[H]
		\centering
		\includegraphics[width=0.58\textwidth]{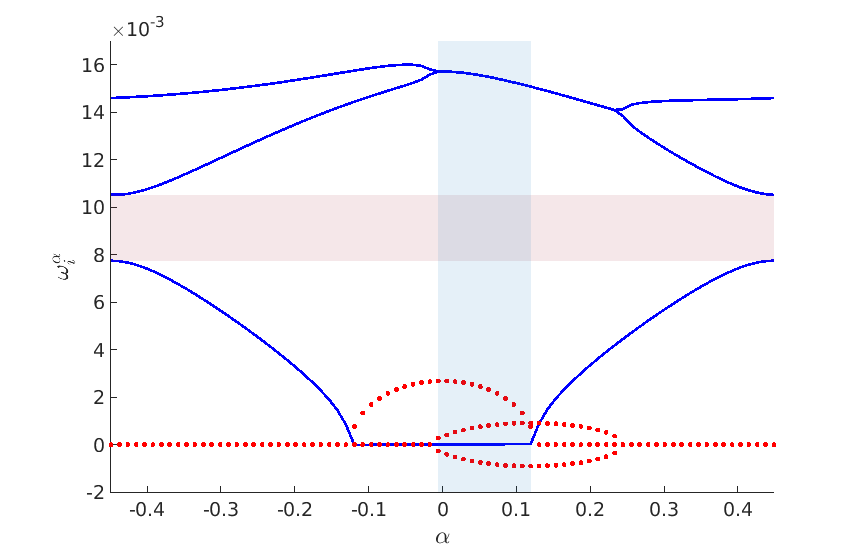}
		\caption{The band structure corresponding to $N=3$ resonators with the following parameter values: $\ell_1=\ell_2=\ell_3=1,\,\ell_{12}=\ell_{34}=1,\,\ell_{23}=2,\,v_0=v_{\mathrm{r}}=1,\,\Omega=0.034,\delta=0.0001,\,\varepsilon_{\kappa}=0.4,\,\varepsilon_{\srs}=0.2$. The blue line marks the real parts and the red dots the imaginary parts of the subwavelength resonant frequencies. The blue shaded area marks a momentum gap and the red shaded area marks a band gap.}
		\label{fig:bandfct_kbgap}
	\end{figure}
	
	\section{Localisation in structures with defects}\label{sec:loc_formulas}
	In \Cref{sec:math_setup}, we saw the presence of simultaneous band gaps and momentum gaps in structures which are periodic in both space and time. The band structure is concisely described by the capacitance matrix ODE \eqref{eq:v_ODE_perioidc}. The goal, now, is to achieve space-time localisation by breaking this periodicity. In this section, we will consider local material defects in both space and time, and derive analogous capacitance formulations.
	
	\subsection{Space localisation}
	To achieve space localisation we assume $\rho$ to be inhomogeneous in space. We remark that it is redundant to consider $\kappa$ to be spatially inhomogeneous because this leads to the same equations as the case of a spatial defect in $\rho$.
    We index the cells by $m\in\mathbb{Z}$ and assume that the time-modulated parameter $\rho$ does not only differ between each resonator, but also between cells, that is:
		\begin{align}\label{eq:rho_nonper}
			\rho(x)=\begin{cases}
				\rho,&x\notin D;\\
				\rho_{\mathrm{r}}^{m,i},&x\in D_i^m.
			\end{cases}
		\end{align}
        
	We generalise \cite[Proposition 4.2]{ammarihiltunen2020edge} to the  time-dependent case:
	\begin{prop}\label{prop:cap_vmi}
		Any localised solution $u$ to \eqref{eq:dD_system_u} corresponding to a subwavelength quasifrequency $\omega$ satisfies the following ODE:
		\begin{multline}\label{eq:cap_approx_vmi}
			\sum\limits_{j=1}^NC_{ij}^{\alpha}\left(\frac{1}{\srs_i(t)}\sum\limits_{n\in\mathbb{Z}}\left(\sum\limits_{m\in\mathbb{Z}}v_{i,n}^m\mathrm{e}^{\mathrm{i}\alpha mL}\right)\mathrm{e}^{\mathrm{i}(\omega+n\Omega)t}\right) \\
			=\frac{\ell_i}{\srs_i(t)\kappa_{\mathrm{r}}}\frac{\mathrm{d}}{ \mathrm{d}t}\left(\frac{1}{\kappa_i(t)}\frac{\mathrm{d}}{\mathrm{d}t}\left(\sum\limits_{n\in\mathbb{Z}}\left(\sum\limits_{m\in\mathbb{Z}}v_{i,n}^m\mathrm{e}^{ \mathrm{i}\alpha mL}\frac{\rho_{\mathrm{r}}^{m,j}}{\delta^m}\right)\mathrm{e}^{\mathrm{i}(\omega+n\Omega)t}\right)\right),
		\end{multline}
		for all $i=1,\dots,N$.
	\end{prop}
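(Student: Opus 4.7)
The plan is to mimic the derivation of the periodic ODE \eqref{eq:ODEperiodic} cell-by-cell, keeping explicit track of where the spatial inhomogeneity in $\rho$ (equivalently the cell-dependent $\delta^m$ and $\rho_{\mathrm{r}}^{m,i}$) enters, and then to package the cell-indexed identities into the quasiperiodic sums $\sum_{m\in\Z} v_{i,n}^m \mathrm{e}^{\mathrm{i}\alpha mL}$ that appear in \eqref{eq:cap_approx_vmi}. The key observation which makes the capacitance matrix $C^\alpha$ survive essentially untouched is that the profiles $V_j^\alpha$ of Lemma \ref{lemma:vn_sum} depend only on the (still periodic) background geometry, not on $\rho$.

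First, I fix a cell $m$ and a resonator $D_i^m$ and repeat the Fourier calculation that produced \eqref{eq:Ivn_**}. The inside PDE from \eqref{eq:dD_system_u} now has the local wave speed $v_{\mathrm{r}}^{m,i}=\sqrt{\kappa_{\mathrm{r}}/\rho_{\mathrm{r}}^{m,i}}$; the $\srs_i$- and $\kappa_i$-transmission conditions are untouched since $\srs_i,\kappa_i$ are not defected, and only the $\rho$-conormal jump acquires the cell-dependent factor $\delta^m = \rho_{\mathrm{r}}^{m,i}/\rho_0$. Going through the same integration-by-parts as in \Cref{sec:cap_approx} then yields the obvious cell-indexed analogue of \eqref{eq:Ivn_**}, with the constant prefactor $\rho_{\mathrm{r}}/(\delta\kappa_{\mathrm{r}})$ replaced by $\rho_{\mathrm{r}}^{m,i}/(\delta^m\kappa_{\mathrm{r}})$. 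On the other hand, Lemma \ref{lemma:vn_sum} applied cell-by-cell (after indexing its conclusion by $m$) gives the cell-wise version of \eqref{eq:Ivn_cap}, in which $C_{ij}^\alpha$ appears unchanged.

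Second, I multiply both cell-wise identities by $\mathrm{e}^{\mathrm{i}\alpha mL}$ and sum over $m \in \Z$. Localisation of $u$ supplies the decay in $m$ which makes these sums converge. After replacing the cell integral $\int_{D_i^m} v_l^m\,\mathrm{d}x$ by $\ell_i v_{i,l}^m$, the left-hand side becomes the Fourier-side of $\sum_j C_{ij}^\alpha(\srs_i^{-1}\cdot)$ applied to $\sum_m v_{j,n}^m\mathrm{e}^{\mathrm{i}\alpha mL}$, and the right-hand side becomes the analogous quasiperiodic sum carrying the cell-dependent weights $\rho_{\mathrm{r}}^{m,i}/\delta^m$. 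Finally, taking the inverse Fourier transform in $n$ collapses the convolutions in $\srs_{i,n}$ and $k_{i,n}$ into pointwise multiplication by $1/\srs_i(t)$ and $1/\kappa_i(t)$, while the factors $(\omega+l\Omega)(\omega+k\Omega)$ become two successive $\mathrm{d}/\mathrm{d}t$ applied to the $\kappa_i^{-1}(t)\,\cdot$ bracket. This delivers \eqref{eq:cap_approx_vmi} exactly.

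The main obstacle is bookkeeping rather than analysis. Two points need care: (i) in the absence of true periodicity, $\alpha$ plays the role of a formal packaging variable rather than a genuine band label, so I must argue that the family $\{\mathrm{e}^{\mathrm{i}\alpha mL}\}_{\alpha\in Y^*}$ is rich enough to encode the localised mode (a standard completeness/Plancherel argument); (ii) the remainders $O(\delta^{(1-\gamma)/2})$ inherited from Lemmas \ref{lemma:vn_const}-\ref{lemma:vn_sum} must be shown to be uniform in $m$, so that the cell-summation of error terms preserves the leading-order ODE. Both are routine adaptations of the arguments in \cite{erik_JCP,jinghao_liora}, which is why Proposition \ref{prop:cap_vmi} should follow directly once the cell-wise derivation above is carried out.
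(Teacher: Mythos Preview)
Your plan reverses the order of the paper's argument: you work cell-by-cell in real space and only at the end package the identities into $\alpha$-sums, whereas the paper applies the Floquet--Bloch transform \emph{first} to the non-periodic system \eqref{eq:dD_system_vn_nonper}, obtaining a problem \eqref{eq:dD_system_vna_nonper} for $v_n^\alpha=\mathcal{I}[v_n](\cdot,\alpha)$ posed on a single unit cell with quasiperiodic boundary conditions. The spatial defect then sits only in the (transformed) interior equation and transmission condition, as the weights $\rho_{\mathrm r}^{m,i}\mathrm e^{\mathrm i\alpha mL}$ and $\delta^m\mathrm e^{\mathrm i\alpha mL}$, while the exterior Laplace problem is exactly the periodic one. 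This is precisely what allows Lemma~\ref{lemma:vn_sum}, and hence $C_{ij}^\alpha$, to be invoked directly on $v_n^\alpha$.

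Your step invoking Lemma~\ref{lemma:vn_sum} ``cell-by-cell'' has a genuine gap. In real space the exterior solution near $D_i^m$ is determined by the boundary data on \emph{all} resonators $D_j^{m'}$, so the analogue of \eqref{eq:Ivn_cap} at a fixed cell $m$ is
\[
I_{\partial D_i^m}[v_n]=\sum_{l}\sum_{j=1}^N\sum_{m'\in\Z}\srsf_{j,l}\,C_{ij}^{m-m'}\,v_{j,n-l}^{m'},
\]
with the real-space coefficients $C^{m-m'}$ of Lemma~\ref{lemma:Cm_realspace}; there is no single-cell identity featuring $C_{ij}^\alpha$, because $\alpha$ has no meaning before the transform is taken. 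The quasiperiodic matrix $C^\alpha$ only emerges \emph{after} you multiply by $\mathrm e^{\mathrm i\alpha mL}$ and sum over $m$, turning the convolution in $m'$ into multiplication by $C^\alpha$. So either you must carry the full coupling $\sum_{m'}C^{m-m'}$ through the cell-wise derivation and then sum---which is correct but heavier---or, more cleanly, apply the Floquet--Bloch transform at the outset as the paper does, after which Lemma~\ref{lemma:vn_sum} applies without modification. Your remark~(i) that $\alpha$ is a ``formal packaging variable'' is on point, but the packaging has to happen before you may legitimately write $C_{ij}^\alpha$.
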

	\begin{proof}
		We consider a cell containing $N$ resonators in which the modes $v_n$ satisfy
		\begin{equation}\label{eq:dD_system_vn_nonper} 
			\left\{
			\begin{array} {ll}
				\frac{\mathrm{d}^2}{\mathrm{d}x^2}v_n+\left(k^{(n)}\right)^2v_n = 0, \quad &x\notin D, \\[1em]
				\frac{\mathrm{d}^2}{\mathrm{d}x^2}v_n+\left(k_{\mathrm{r}}^{(i,n)}\right)^2v_{i,n}^{**} = 0, \quad &x\in D_i, \quad i=1,\ldots,N,\\[1em]
				\ds v_n|_{\pm}(x_i^{\pm}) = v_{i,n}^*|_{\mp}(x_i^{\pm}), \quad &i=1,\dots,N, \\[1em] 	
				\ds \frac{\mathrm{d} v_{i,n}^*}{\mathrm{d} x }\bigg|_{\mp}(x_i^{\pm}) = \delta\frac{\mathrm{d} v_n}{\mathrm{d} x }\bigg|_{\pm}(x_i^{\pm}), &i=1,\dots,N. 
			\end{array}		
			\right.
		\end{equation}
		Assume that the structure we are interested in is given by infinitely many adjacent cells, each containing a collection of $N$ resonators. 
        
		In view of \eqref{eq:rho_nonper},  applying the Floquet-Bloch transform to the above equations, we obtain the following equations governing $v_n^{\alpha}(x):=\mathcal{I}[v_n](x,\alpha)$:
		\begin{equation}\label{eq:dD_system_vna_nonper} 
			\left\{
			\begin{array} {ll}
				\frac{\mathrm{d}^2}{\mathrm{d}x^2}v^{\alpha}_n+\left(k^{(n)}\right)^2v_n^{\alpha} = 0, \quad &x\notin D, \\[1em]
				\frac{\mathrm{d}^2}{\mathrm{d}x^2}v^{\alpha}_n+\frac{\left(\omega+n\Omega\right)}{\kappa_{\mathrm{r}}}\sum\limits_{m\in\mathbb{Z}}\sum\limits_{l\in\mathbb{Z}}k_{i,l}\mathrm{e}^{\mathrm{i}\alpha mL}\rho_{\mathrm{r}}^{m,i}\left(\omega+(n-l)\Omega\right)v_{n-l} = 0, \quad &x\in D_i,\\[1em]
				\ds v^{\alpha}_n|_{\pm}(x_i^{\pm}) = v_{i,n}^{*,\alpha}|_{\mp}(x_i^{\pm}), \quad &i=1,\dots,N, \\[1em] 	
				\ds \frac{\mathrm{d} v_{i,n}^{*,\alpha}}{\mathrm{d} x }\bigg|_{\mp}(x_i^{\pm}) = \sum\limits_{m=-\infty}^{\infty}\delta^m\mathrm{e}^{\mathrm{i}\alpha mL}\frac{\mathrm{d} v^{\alpha}_n}{\mathrm{d} x }\bigg|_{\pm}(x_i^{\pm}), &i=1,\dots,N. 
			\end{array}		
			\right.
		\end{equation}
		Now, apply the operator $I_{\partial D_j}$ to $v_n^{\alpha}$ and use the transmission condition and the interior equation, which results in
		\begin{align}
			I_{\partial D_j}[v_n^{\alpha}]&=-\frac{1}{\delta^m}\int_{x_j^-}^{x_j^+}\frac{\mathrm{d}^2}{\mathrm{d}x^2}v_{j,n}^{*,\alpha}\,\mathrm{d}x\nonumber\\
			&=\sum\limits_{s\in\mathbb{Z}}\srsf_{j,s}\int_{x_j^-}^{x_j^+}\frac{(\omega+(n-s)\Omega)}{\kappa_{\mathrm{r}}}\sum\limits_{m\in\mathbb{Z}}\mathrm{e}^{\mathrm{i}\alpha mL}\frac{\rho_{\mathrm{r}}^{m,j}}{\delta^m}\sum\limits_{l\in\mathbb{Z}}k_{j,l}(\omega+(n-s-l)\Omega)v^{\alpha}_{n-s-l}\,\mathrm{d}x\nonumber\\
			&=\frac{1}{\kappa_{\mathrm{r}}}\sum\limits_{s\in\mathbb{Z}}\srsf_{j,n-s}(\omega+s\Omega)\sum\limits_{m\in\mathbb{Z}}\mathrm{e}^{\mathrm{i}\alpha mL}\frac{\rho_{\mathrm{r}}^{m,j}}{\delta^m}\sum\limits_{l\in\mathbb{Z}}k_{j,s-l}(\omega+l\Omega)\int_{x_j^-}^{x_j^+}v_l^{\alpha}\,\mathrm{d}x\nonumber\\
			&=\frac{\ell_j}{\kappa_{\mathrm{r}}}\sum\limits_{s\in\mathbb{Z}}\srsf_{j,n-s}(\omega+s\Omega)\sum\limits_{m\in\mathbb{Z}}\sum\limits_{l\in\mathbb{Z}}v_{j,l}^m\mathrm{e}^{\mathrm{i}\alpha mL}\frac{\rho_{\mathrm{r}}^{m,j}}{\delta^m}k_{j,s-l}(\omega+l\Omega).
		\end{align}
		In the last step we used the result of Lemma \ref{lemma:vn_const}, namely,
		\begin{align}
			v^{\alpha}_n(x)=v_{i,n}=\sum\limits_{m\in\mathbb{Z}}v_{i,n}^m\mathrm{e}^{\mathrm{i}\alpha mL},\qquad\forall\,x\in D_i.
		\end{align}
		On the other hand we may use the definition of the capacitance matrix coefficients and obtain
		\begin{align}
			I_{\partial D_j}[v_n^{\alpha}]&=I_{\partial D_j}\left[\sum\limits_{l=-M}^M\sum\limits_{i=1}^NV_i^{\alpha}(x)\srsf_{i,l}\sum\limits_{m\in\mathbb{Z}}v_{i,n-l}^m\mathrm{e}^{\mathrm{i}\alpha mL}\right]\nonumber\\
			&=\sum\limits_{l=-M}^M\sum\limits_{i=1}^N\srsf_{i,l}C_{ij}^{\alpha}\sum\limits_{m\in\mathbb{Z}}v_{i,n-l}^m\mathrm{e}^{\mathrm{i}\alpha mL},
		\end{align}
		where $V_i^{\alpha}$ is defined by \eqref{def:Vialpha}. Thus, we conclude that
		\begin{align}
			\sum\limits_{l=-M}^M\sum\limits_{i=1}^N\srsf_{i,l}C_{ij}^{\alpha}\sum\limits_{m\in\mathbb{Z}}v_{i,n}^m\mathrm{e}^{\mathrm{i}\alpha mL}=\frac{\ell_j}{\kappa_{\mathrm{r}}}\sum\limits_{s\in\mathbb{Z}}\srsf_{j,n-s}(\omega+s\Omega)\sum\limits_{m\in\mathbb{Z}}\sum\limits_{l\in\mathbb{Z}}v_{j,l}^m\mathrm{e}^{\mathrm{i}\alpha mL}\frac{\rho_{\mathrm{r}}^{m,j}}{\delta^m}k_{j,s-l}(\omega+l\Omega),
		\end{align}
		which can equivalently be written as \eqref{eq:cap_approx_vmi}.
	\end{proof}
	Following \Cref{sec:realspace}, we rephrase \eqref{eq:cap_approx_vmi} into a real-space capacitance formulation. We introduce the quantities $b_i^m$ such that
	\begin{align}\label{eq:bmi}
		\delta^m\left(v_i^m\right)^2=\delta^m\frac{\kappa_{\mathrm{r}}}{\rho_{\mathrm{r}}^{m,i}}=\delta\frac{\kappa_{\mathrm{r}}}{\rho^i_{\mathrm{r}}}b_i^m=\delta\left(v_i\right)^2b_i^m
	\end{align}
	and we define the diagonal matrices 
	\begin{align}\label{eq:B}
		B^m:=\mathrm{diag}\left(b_i^m\right)_{i=1}^N,\quad B:=\begin{bsmallmatrix}
			\sddots & \svdots & \svdots & \svdots & \svdots & \sadots \\
			\cdots & B^{-1} & 0 & 0 & 0 & \cdots \\
			\cdots & 0 & B^0 & 0 & 0 & \cdots \\
			\cdots & 0 & 0 & B^1 & 0 & \cdots \\
			\cdots & 0 & 0 & 0 & B^2 & \cdots \\
			\sadots & \svdots & \svdots & \svdots & \svdots & \sddots  
		\end{bsmallmatrix}.
	\end{align}\par 
	We shall now use the ODE proved in Proposition \ref{prop:cap_vmi} and the multiplication-rule proved in Lemma \ref{lemma:multip} to derive a capacitance matrix approximation in the real space, rather than the dual space, as in the static case in \cite[Proposition 3.5]{ammari2023anderson}.
	\begin{prop}\label{prop:conv_cap_real}
		Any localised solution to \eqref{eq:dD_system_u}, corresponding to a subwavelength frequency $\omega$ satisfies
		\begin{align}\label{eq:conv_cap_real}
			B^m\sum\limits_{n\in\Lambda}\mathcal{C}^{m-n}\left(\sum\limits_{k\in\mathbb{Z}}\mathbf{v}_k^n\mathrm{e}^{\mathrm{i}(\omega+k\Omega)t}\Srs^{-1}(t)\right)=\Srs^{-1}(t)\frac{\mathrm{d}}{\mathrm{d}t}\left(K^{-1}(t)\frac{\mathrm{d}}{\mathrm{d}t}\sum\limits_{k\in\mathbb{Z}}\mathbf{v}_k^m\mathrm{e}^{\mathrm{i}(\omega+k\Omega)t}\right),
		\end{align}
		for every $m\in\Lambda$, where $\mathcal{C}^m=\mathcal{I}^{-1}[\mathcal{C}^{\alpha}](m)$.
	\end{prop}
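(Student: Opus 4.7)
The plan is to derive \eqref{eq:conv_cap_real} by taking the inverse Floquet-Bloch transform of the quasi-momentum equation \eqref{eq:cap_approx_vmi} from Proposition \ref{prop:cap_vmi} and reorganising the factors using Lemma \ref{lemma:multip} and the defining relation \eqref{eq:bmi}. The key observation is that the inner sums $\sum_{m\in\mathbb{Z}} v_{i,n}^m \mathrm{e}^{\mathrm{i}\alpha mL}$ appearing in \eqref{eq:cap_approx_vmi} are precisely the Floquet-Bloch transforms of the real-space coefficient sequences $(v_{i,n}^m)_{m\in\Lambda}$, while the quasi-periodic capacitance $C_{ij}^\alpha$ has inverse transform $C_{ij}^m$ by the definition preceding Lemma \ref{lemma:Cm_realspace}. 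The modifying factor $\rho_{\mathrm{r}}^{m,j}/\delta^m$ on the RHS of \eqref{eq:cap_approx_vmi}, which encodes the spatial defect, can be rewritten by \eqref{eq:bmi} as $\rho_{\mathrm{r}}^j/(\delta\, b_j^m)$.

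Next, I would apply $\mathcal{I}^{-1}$ to both sides of \eqref{eq:cap_approx_vmi}. On the LHS, the quantity is a product of $C^\alpha$ and the Floquet-Bloch transform of $v$, so Lemma \ref{lemma:multip} converts it into a convolution over the lattice: $\sum_{n\in\Lambda} C^{m-n} \mathbf{v}^n(t)$ in vector form, where $\mathbf{v}^n(t) = \sum_k \mathbf{v}_k^n \mathrm{e}^{\mathrm{i}(\omega+k\Omega)t}$. The time-modulated factor $\Srs^{-1}(t)$ appears as a multiplier acting on $\mathbf{v}^n$ via the diagonal matrix $\Srs(t)$, since $1/s_i(t)$ is already present in \eqref{eq:cap_approx_vmi}. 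On the RHS, the transform $\mathcal{I}^{-1}$ acts on a pointwise product in the $\alpha$-variable that is already separated in $m$, so no convolution is produced and the pointwise product $v_{j,n}^m \cdot \rho_{\mathrm{r}}^j/(\delta\, b_j^m)$ remains, all wrapped in the temporal differential operator $\frac{\mathrm{d}}{\mathrm{d}t}(K^{-1}(t)\frac{\mathrm{d}}{\mathrm{d}t}\cdot)$ with the $\Srs^{-1}(t)$ prefactor.

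Finally, I would bring the $(b_j^m)^{-1}$ appearing on the RHS across the equation as multiplication by $B^m$ on the LHS, and absorb the prefactor $\delta\kappa_{\mathrm{r}}/\rho_{\mathrm{r}}$ together with $\mathcal{L}=\mathrm{diag}(1/\ell_i)$ into the definition of the generalised capacitance $\mathcal{C}^\alpha$, whose inverse Floquet-Bloch transform is $\mathcal{C}^m$. After these rearrangements the equation is precisely \eqref{eq:conv_cap_real}. The main obstacle is purely bookkeeping: ensuring that the diagonal matrix $B^m$ is correctly placed on the left of $\mathcal{C}^{m-n}$ (rather than absorbed into the sum) and that the cancellation of $\rho_{\mathrm{r}}^j$ between the RHS factor and the $\mathcal{C}$-prefactor leaves only $b_j^m$ as the residual defect contribution. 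The rest of the manipulation is a direct transcription of the periodic calculation of \eqref{eq:v_ODE_perioidc} once the convolution structure in Lemma \ref{lemma:multip} is invoked.
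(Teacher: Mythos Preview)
Your proposal is correct and follows essentially the same route as the paper's proof: rewrite \eqref{eq:cap_approx_vmi} in matrix form by absorbing $\ell_i,\,\delta,\,\kappa_{\mathrm{r}},\,\rho_{\mathrm{r}}^j$ into $\mathcal{C}^\alpha$ and the defect factor $\rho_{\mathrm{r}}^{m,j}/\delta^m$ into $(B^m)^{-1}$ via \eqref{eq:bmi}, then apply $\mathcal{I}^{-1}$ and invoke Lemma~\ref{lemma:multip} on the left-hand side to produce the lattice convolution $\sum_{n\in\Lambda}\mathcal{C}^{m-n}(\cdot)$. The paper's proof is terser---it simply records the intermediate matrix equation \eqref{eq:pre_capreal} and states that the result follows from the inverse transform and Lemma~\ref{lemma:multip}---but your expanded bookkeeping (in particular the observation that the right-hand side is already Fourier-separated in $m$, so no convolution arises there) is exactly what underlies that step.
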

	\begin{proof}
		If we assume the material to be perturbed in space, the formulation obtained in Proposition \ref{prop:cap_vmi} can be rewritten as
		\begin{align}\label{eq:pre_capreal}
			\mathcal{C}^{\alpha}\Biggl(\sum\limits_{\substack{n\in\mathbb{Z} \\ m\in\mathbb{Z}}} \mathbf{v}_n^m\mathrm{e}^{\mathrm{i}\alpha mL}\mathrm{e}^{\mathrm{i}(\omega+n\Omega)t}\Srs^{-1}(t)\Biggr)=\Srs^{-1}(t)\frac{\mathrm{d}}{\mathrm{d}t}\Biggl(K^{-1}(t)\frac{\mathrm{d}}{\mathrm{d}t}\sum\limits_{\substack{n\in\mathbb{Z} \\ m\in\mathbb{Z}}} \mathbf{v}_n^m\left(B^m\right)^{-1}\mathrm{e}^{\mathrm{i}\alpha mL}\mathrm{e}^{\mathrm{i}(\omega+n\Omega)t}\Biggr),
		\end{align}
		where $\mathcal{C}^{\alpha}$ is the generalised capacitance matrix. Recall that $\mathcal{I}[\mathbf{v}_n^m]=\mathbf{v}^{\alpha}_n$. Lastly, we take the inverse Floquet-Bloch transform of \eqref{eq:pre_capreal} and use the result of Lemma \ref{lemma:multip} for the left-hand side to obtain the desired result \eqref{eq:conv_cap_real}.\par
	\end{proof}
	With $B$ as in \eqref{eq:B}, equation \eqref{eq:conv_cap_real} can more concisely be expressed as a vector ODE 
	\begin{align}\label{eq:v_ODE}
		B\mathcal{C}^{\alpha}\Srsc^{-1}(t)\mathbf{v}(t)=\Srsc^{-1}(t)\frac{\mathrm{d}}{\mathrm{d}t}\left(\mathcal{K}^{-1}(t)\frac{\mathrm{d}}{\mathrm{d}t}\mathbf{v}(t)\right).
	\end{align}
	\subsubsection{Toeplitz matrix formulation}
	In the case of compact defects, we shall follow \cite{ammari2023anderson} and rewrite the problem \eqref{eq:v_ODE} in the form of a linear system with a (finite) Toeplitz matrix. We assume that the matrices $B^m$ are the identity matrix for all $m<0$ and $m>M$, for some positive integer $M$, \textit{i.e.},
	\begin{align}\label{def:bmi}
		b_i^m:=\begin{cases}
			1,&m<0 \,\,\text{or}\,\,m>M;\\
			1+\eta_i^m,&0\leq m\leq M,
		\end{cases}
	\end{align}
	for some $\eta_i^m\in(-1,\infty)$. We introduce the diagonal matrix $H^m:=\mathrm{diag}\left(\eta_i^m\right)_{i=1}^N$. Analogously to \cite[Proposition 3.7]{ammari2023anderson} we now prove a formula for the subwavelength resonant frequencies of \eqref{eq:v_ODE} in the form of a root-finding problem. For simplicity, we phrase the result for $N=1$, although it is straightforward to generalise to a higher number of resonators per unit cell.
	\begin{prop}\label{prop:toeplitz}
		Assume that $N=1$ and $b^m_i$ are given by \eqref{def:bmi}, then $\omega_0$ is a resonant frequency of \eqref{eq:v_ODE} if and only if
		\begin{align}\label{eq:det_cond}
			\mathrm{det}\left(I-\mathcal{H}\mathcal{T}(\omega_0)\right)=0,
		\end{align}
		where $\mathcal{H}:=\mathrm{diag}\left(G^m\right)_{m=0}^M$ for
		\begin{align}
			G^m:=\begin{bsmallmatrix}
				0 & \eta^m\frac{\varepsilon_{\srs}}{2} & \eta^m & \eta^m\frac{\varepsilon_{\srs}}{2} & 0 & & \cdots & & 0 \\
				& & & & & & & & & \\
				\vdots & & \ddots & \ddots & \ddots & \ddots & \ddots & & \vdots \\
				& & & & & & & & & \\
				0 & & \cdots & & 0 & \eta^m\frac{\varepsilon_{\srs}}{2} & \eta^m & \eta^m\frac{\varepsilon_{\srs}}{2} & 0
			\end{bsmallmatrix}
		\end{align}
		and 
		\begin{align}
			\mathcal{T}(\omega):=\begin{bsmallmatrix}
				T^0 & T^1 & \cdots & T^M \\
				T^{-1} & \ddots & & T^{M-1} \\
				\vdots & & \ddots & \vdots \\
				T^{-M} & T^{-(M-1)} & \cdots & T^0
			\end{bsmallmatrix},\qquad 
			T^m(\omega):=-\frac{L}{2\pi}\int_{-\frac{\pi}{L}}^{\frac{\pi}{L}}\mathcal{C}^{\alpha}\Gamma^{\alpha}(\omega)^{-1}\mathrm{e}^{\mathrm{i}\alpha m}\,\mathrm{d}\alpha, 
		\end{align}
		with
		\begin{align}
			\Gamma^{\alpha}(\omega):=\begin{bsmallmatrix}
				\gamma_2^K & \gamma_1^K & \gamma_0^K & \gamma_{-1}^K & \gamma_{-2}^K & 0 & & \cdots & & 0\\
				0 & & & & & & & & & \\
				\vdots & \ddots & \ddots & \ddots & \ddots & \ddots & \ddots & \ddots & & \vdots \\
				& & & & & & & & & 0 \\
				0 & & \cdots & & 0 & \gamma_2^{-K} & \gamma_1^{-K} & \gamma_0^{-K} & \gamma_{-1}^{-K} & \gamma_{-2}^{-K}
			\end{bsmallmatrix}.
		\end{align}
		Here, the coefficients of $\Gamma_n^{\alpha}(\omega)$ are given by
		\begin{align}
			&\gamma_2^n:=\frac{\varepsilon_{\kappa}\varepsilon_{\srs}}{4}\left(\omega+(n+2)\Omega\right)^2-\Omega\frac{\varepsilon_{\kappa}\varepsilon_{\srs}}{4}\left(\omega+(n+2)\Omega\right),\nonumber\\
			&\gamma_1^n:=\mathcal{C}^{\alpha}\frac{\varepsilon_{\srs}}{2}-\Omega\frac{\varepsilon_{\kappa}}{2}\left(\omega+(n+1)\Omega\right)+\frac{\varepsilon_{\kappa}+\varepsilon_{\srs}}{2}\left(\omega+(n+1)\Omega\right)^2,\nonumber\\
			&\gamma_0^n:=\mathcal{C}^{\alpha}+\left(\frac{\varepsilon_{\srs}\varepsilon_{\kappa}}{2}+1\right)\left(\omega+n\Omega\right)^2,\nonumber\\
			&\gamma_{-1}^n:=\mathcal{C}^{\alpha}\frac{\varepsilon_{\srs}}{2}+\Omega\frac{\varepsilon_{\kappa}}{2}\left(\omega+(n-1)\Omega\right)+\frac{\varepsilon_{\kappa}+\varepsilon_{\srs}}{2}\left(\omega+(n-1)\Omega\right)^2,\nonumber\\
			&\gamma_{-2}^n:=\frac{\varepsilon_{\kappa}\varepsilon_{\srs}}{4}\left(\omega+(n-2)\Omega\right)^2+\Omega\frac{\varepsilon_{\kappa}\varepsilon_{\srs}}{4}\left(\omega+(n-2)\Omega\right).
			\nonumber
		\end{align}
	\end{prop}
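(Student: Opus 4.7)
The plan is to treat the compactly supported perturbation $B = I + H$, with $H^m = \eta^m$ for $0 \le m \le M$ and zero otherwise, in the real-space ODE \eqref{eq:v_ODE} via a Lippmann--Schwinger-type reduction: invert the unperturbed operator using a Floquet--Bloch Green's function, then close the resulting equation on the finite-dimensional subspace of indices $\{0,\ldots,M\}$.

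\textbf{Step 1 (identifying $\Gamma^\alpha(\omega)$).} Starting from \eqref{eq:conv_cap_real} with $N=1$ and $b^m = 1+\eta^m$, I split the equation into an unperturbed part (coefficient $1$) plus a perturbation linear in $\eta^m$. Expanding $\mathbf{v}(t)=\sum_k v^m_k \mathrm{e}^{\mathrm{i}(\omega+k\Omega)t}$ and using \eqref{def:invrk_fourier}, I match the coefficient of $\mathrm{e}^{\mathrm{i}(\omega+p\Omega)t}$ at each spatial site $m$ and apply the Floquet--Bloch transform $\mathcal{I}$, so that the spatial convolution $\mathcal{C}^{m-n}\ast v^n$ becomes multiplication by $\mathcal{C}^\alpha$. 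The coefficient of $v^\alpha_k$ on the unperturbed side becomes
\[
\mathcal{C}^\alpha \srsf_{p-k} + \sum_l \srsf_{p-k-l}\,k_l\,(\omega+k\Omega)(\omega+(k+l)\Omega),
\]
which is supported on $|k-p|\le 2$ since only $\srsf_{0,\pm 1}, k_{0,\pm 1}$ are nonzero. A direct expansion, using $(\omega+(k\pm 1)\Omega)^2 = (\omega+k\Omega)^2 \pm 2\Omega(\omega+k\Omega)+\Omega^2$ and $\srsf_1 k_{-1} + \srsf_{-1}k_1 = \varepsilon_\srs \varepsilon_\kappa /2$, reproduces the five pentadiagonal entries $\gamma^p_j(\omega)$ of the statement.

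\textbf{Step 2 (Toeplitz reduction).} The perturbation side equals $-\eta^m\sum_n\mathcal{C}^{m-n}\sum_{j+k=p}\srsf_j v^n_k$, the composition of pointwise multiplication by $\eta^m$ in space, spatial convolution with $\mathcal{C}^{m-n}$, and the time-mode convolution by $(\srsf_j)$. Setting $\mathbf{y}^m := -\eta^m\sum_n \mathcal{C}^{m-n}\sum_{j+k=p}\srsf_j v^n_k$, which is supported on $0\le m\le M$, the equation reads $\Gamma^\alpha(\omega)\mathbf{v}^\alpha = \mathbf{y}^\alpha$. Inverting $\Gamma^\alpha(\omega)$, applying $\mathcal{C}^\alpha$ on both sides, and taking the inverse Floquet--Bloch transform with Lemma \ref{lemma:multip} yields, for each $m\in\{0,\ldots,M\}$, an identity of the form $\mathbf{y}^m = \eta^m\sum_n (S\,T^{n-m}(\omega))\,\mathbf{y}^n$, where $S$ is the pentadiagonal time-mode convolution by $(\srsf_j)$ and $T^m(\omega)$ is exactly the integral in the statement. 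The composition $\eta^m S$ is precisely the pentadiagonal block $G^m$, while the array $\{T^{n-m}\}_{0\le m,n\le M}$ is the block-Toeplitz matrix $\mathcal{T}(\omega)$; assembled over $m = 0,\ldots,M$, this becomes $(I-\mathcal{H}\mathcal{T}(\omega))\mathbf{y}=0$. A subwavelength resonance $\omega_0$ corresponds to a nontrivial $\mathbf{v}$, which --- away from unperturbed resonances where $\Gamma^\alpha(\omega_0)$ is invertible --- is equivalent to a nontrivial $\mathbf{y}$, hence to $\det(I-\mathcal{H}\mathcal{T}(\omega_0))=0$, giving \eqref{eq:det_cond}.

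The main obstacle is the bookkeeping in Step 1: one has to trace the double convolution $\srsf\ast k$ through the differential operator $\frac{\mathrm{d}}{\mathrm{d}t}(\kappa^{-1}(t)\frac{\mathrm{d}}{\mathrm{d}t}\cdot)$, expand the cross-products $(\omega+k\Omega)(\omega+(k+l)\Omega)$ for each $l\in\{-1,0,1\}$, and observe that the $\Omega$-linear terms survive only in the off-diagonal entries $\gamma_{\pm 1}^n, \gamma_{\pm 2}^n$, while the $\varepsilon_\srs\varepsilon_\kappa/2$ correction to $\gamma_0^n$ emerges from combining $\srsf_{\pm 1}k_{\mp 1}$ with the two asymmetric cross-products $(\omega+k\Omega)(\omega+(k\pm 1)\Omega)$. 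The Floquet--Bloch and Lippmann--Schwinger manipulations in Step 2 are then routine given Proposition \ref{prop:conv_cap_real} and Lemma \ref{lemma:multip}.
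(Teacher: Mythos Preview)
Your proposal is correct and follows essentially the same route as the paper: expand the time derivatives with the explicit Fourier coefficients of $1/\kappa$ and $1/\srs$ to identify the pentadiagonal operator $\Gamma^\alpha(\omega)$, isolate the compactly supported perturbation $H^m\mathcal{C}^\alpha S^{-1}$, invert the unperturbed operator via the Floquet--Bloch transform, and close on the finite index set $\{0,\dots,M\}$ to obtain the block-Toeplitz determinant condition. One small slip: the time-mode convolution $S$ by $(\srsf_j)_{j=-1,0,1}$ (and hence $G^m=\eta^m S$) is tridiagonal, not pentadiagonal; only $\Gamma^\alpha$ is pentadiagonal because of the double convolution $\srsf\ast k$.
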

	\begin{rem}
		Note that in the above proposition we use the capacitance matrix for $N=1$ resonator, \textit{i.e.}, $\mathcal{C}^{\alpha}$ is a scalar given by \cite[Eq. (A2)]{jinghao_liora}:
		\begin{align}
			\mathcal{C}^{\alpha}=\frac{\delta\kappa_{\mathrm{r}}}{\ell_1\rho_{\mathrm{r}}}\left(\frac{1}{\ell_{01}}+\frac{1}{\ell_{12}}-\frac{2}{\ell_{12}}\mathrm{cos}\left(\alpha L\right)\right).
		\end{align}
	\end{rem}
	\begin{proof}
		Applying the inverse Floquet-Bloch transform to \eqref{eq:pre_capreal} allows us to obtain
		\begin{align}
			\mathcal{I}^{-1}\left[(I+H^m)\mathcal{C}^{\alpha}\left(\sum\limits_{n\in\mathbb{Z}}\mathbf{v}_n^{\alpha}\mathrm{e}^{\mathrm{i}(\omega+n\Omega)t}\frac{1}{\srs(t)}\right)-\frac{1}{\srs(t)}\frac{\mathrm{d}}{\mathrm{d}t}\left(\frac{1}{\kappa(t)}\frac{\mathrm{d}}{\mathrm{d}t}\sum\limits_{n\in\mathbb{Z}}\mathbf{v}_n^{\alpha}\mathrm{e}^{\mathrm{i}(\omega+n\Omega)t}\right)\right]=0,
		\end{align}
		where we used that $B^m=I+H^m$. By linearity of $\mathcal{I}^{-1}$ and upon evaluating the derivatives, we arrive at
		\begin{align}
			\sum\limits_{n\in\mathbb{Z}}\mathrm{e}^{\mathrm{i}(\omega+n\Omega)t}\mathcal{I}^{-1}\left[(I+H^m)\mathcal{C}^{\alpha}\mathbf{v}^{\alpha}_n\frac{1}{\srs(t)}-\mathrm{i}\frac{1}{\srs(t)}\frac{\mathrm{d}}{\mathrm{d}t}\frac{1}{\kappa(t)}\mathbf{v}_n^{\alpha}(\omega+n\Omega)+\frac{1}{\srs(t)\kappa(t)}\mathbf{v}_n^{\alpha}(\omega+n\Omega)^2\right]=0.
		\end{align}
		Next, we substitute the specific definition of $\kappa(t)$ and $\srs(t)$ \eqref{def:rho_kappa} into the equation above and expand $1/\srs(t)$ and $1/\kappa(t)$:
		\begin{align}
			\sum\limits_{n\in\mathbb{Z}}\mathrm{e}^{\mathrm{i}(\omega+n\Omega)t}\mathcal{I}^{-1}\Bigl[&(I+H^m)\mathcal{C}^{\alpha}\mathbf{v}_n^{\alpha}\left(\frac{\varepsilon_{\srs}}{2}\mathrm{e}^{\mathrm{i}\Omega t}+1+\frac{\varepsilon_{\srs}}{2}\mathrm{e}^{-\mathrm{i}\Omega t}\right)\nonumber\\
			&-\mathrm{i}\left(\frac{\varepsilon_{\srs}}{2}\mathrm{e}^{\mathrm{i}\Omega t}+1+\frac{\varepsilon_{\srs}}{2}\mathrm{e}^{-\mathrm{i}\Omega t}\right)\left(\frac{\mathrm{i}\Omega\varepsilon_{\kappa}}{2}\mathrm{e}^{\mathrm{i}\Omega t}-\frac{\mathrm{i}\Omega\varepsilon_{\kappa}}{2}\mathrm{e}^{-\mathrm{i}\Omega t}\right)\mathbf{v}_n^{\alpha}(\omega+n\Omega)\nonumber\\
			&+\left(\frac{\varepsilon_{\srs}}{2}\mathrm{e}^{\mathrm{i}\Omega t}+1+\frac{\varepsilon_{\srs}}{2}\mathrm{e}^{-\mathrm{i}\Omega t}\right)\left(\frac{\varepsilon_{\kappa}}{2}\mathrm{e}^{\mathrm{i}\Omega t}+1+\frac{\varepsilon_{\kappa}}{2}\mathrm{e}^{-\mathrm{i}\Omega t}\right)\mathbf{v}_n^{\alpha}\left(\omega+n\Omega\right)^2\Bigr]=0.
		\end{align}
		Rearranging the sum appropriately leads to 
		\begin{align}
			\sum\limits_{n\in\mathbb{Z}}\mathrm{e}^{\mathrm{i}(\omega+n\Omega)t}\mathcal{I}^{-1}\Bigl[&\left(\frac{\varepsilon_{\kappa}\varepsilon_{\srs}}{4}(\omega+(n-2)\Omega)^2+\Omega\frac{\varepsilon_{\kappa}\varepsilon_{\srs}}{4}(\omega+(n-2)\Omega)\right)\mathbf{v}_{n-2}^{\alpha}\nonumber\\
			&+\left(\mathcal{C}^{\alpha}\frac{\varepsilon_{\srs}}{2}+\Omega\frac{\varepsilon_{\kappa}}{2}(\omega+(n-1)\Omega)+\frac{\varepsilon_{\kappa}+\varepsilon_{\srs}}{2}(\omega+(n-1)\Omega)^2\right)\mathbf{v}_{n-1}^{\alpha}\nonumber\\
			&+\left(\mathcal{C}^{\alpha}+\left(\frac{\varepsilon_{\kappa}\varepsilon_{\srs}}{2}+1\right)(\omega+n\Omega)^2\right)\mathbf{v}_n^{\alpha}\nonumber\\
			&+\left(\mathcal{C}^{\alpha}\frac{\varepsilon_{\srs}}{2}-\Omega\frac{\varepsilon_{\kappa}}{2}(\omega+(n+1)\Omega)+\frac{\varepsilon_{\kappa}+\varepsilon_{\srs}}{2}(\omega+(n+1)\Omega)^2\right)\mathbf{v}_{n+1}^{\alpha}\nonumber\\
			&+\left(\frac{\varepsilon_{\kappa}\varepsilon_{\srs}}{4}(\omega+(n+2)\Omega)^2-\Omega\frac{\varepsilon_{\kappa}\varepsilon_{\srs}}{4}(\omega+(n+2)\Omega)\right)\mathbf{v}_{n+2}^{\alpha}\nonumber\\
			&+H^m\mathcal{C}^{\alpha}\left(\frac{\varepsilon_{\srs}}{2}\mathbf{v}_{n-1}^{\alpha}+\mathbf{v}_n^{\alpha}+\frac{\varepsilon_{\srs}}{2}\mathbf{v}_{n+1}^{\alpha}\right)\Bigr]=0.
		\end{align}
		This is only true if every term in the above sum is zero for every $n\in\mathbb{Z}$. Next, we introduce the matrices $\Gamma^{\alpha}_n(\omega):=\begin{bsmallmatrix}
			\gamma_2^K & \gamma_1^K & \gamma_0^K & \gamma_{-1}^K & \gamma_{-2}^K
		\end{bsmallmatrix}$ and $\mathcal{G}^m$, which are defined such that
		\begin{align}\label{eq:IGammaCG}
			\mathcal{I}^{-1}\Bigl[\Gamma^{\alpha}_n(\omega)\mathbf{w}_n^{\alpha}+\mathcal{C}^{\alpha}\mathcal{G}^m\mathbf{w}_n^{\alpha}\Bigr]=0,\quad\forall\,n\in\mathbb{Z},
		\end{align}
		where
		\begin{align}
			\mathbf{w}_n^{\alpha}:=\begin{bsmallmatrix}\\
				\mathbf{v}^{\alpha}_{n+2} \\ \mathbf{v}^{\alpha}_{n+1} \\ \mathbf{v}^{\alpha}_{n} \\ \mathbf{v}^{\alpha}_{n-1} \\ \mathbf{v}^{\alpha}_{n-2}\\
			\end{bsmallmatrix}.
		\end{align}
		With a truncation parameter $K\in\mathbb{Z}$, we reformulate \eqref{eq:IGammaCG} as one equation independent of $n\in\mathbb{Z}$:
		\begin{align}
			\mathcal{I}^{-1}\Bigl[\Gamma^{\alpha}(\omega)\mathbf{w}^{\alpha}+\mathcal{C}^{\alpha}G^m\mathbf{w}^{\alpha}\Bigr]=0, \quad \mathbf{w}^{\alpha}:=\begin{bsmallmatrix}
				\\ \mathbf{v}^{\alpha}_{K} \\ \vdots \\ \mathbf{v}^{\alpha}_{-K} \\
			\end{bsmallmatrix}.
		\end{align}
		Using linearity of $\mathcal{I}^{-1}$ and applying the inverse Floquet-Bloch transform to \eqref{eq:IGammaCG} allows us to write
		\begin{align}
			\Gamma^{\alpha}\mathbf{w}^{\alpha}=\sum\limits_{m=0}^M\mathbf{c}_m\mathrm{e}^{\mathrm{i}\alpha m},
		\end{align}
		where the Fourier coefficients $\mathbf{c}_m$ are given by
		\begin{align}
			\mathbf{c}_m&=-\mathcal{I}^{-1}\bigl[\mathcal{C}^{\alpha}G^m\mathbf{w}^{\alpha}\bigl]\nonumber\\
			&=-\frac{L}{2\pi}\int_{-\pi/L}^{\pi/L}\mathcal{C}^{\alpha}G^m\mathbf{w}^{\alpha}\mathrm{e}^{-\mathrm{i}\alpha m}\,\mathrm{d}\alpha\nonumber\\
			&=-\mathcal{C}^{\alpha}G^m\sum\limits_{k=0}^M\frac{L}{2\pi}\int_{-\pi/L}^{\pi/L}\left(\Gamma^{\alpha}\right)^{-1}\mathbf{c}_k\mathrm{e}^{\mathrm{i}\alpha(k-m)}\,\mathrm{d}\alpha.
		\end{align}
		From the above characterisation of $\mathbf{c}_m$, \eqref{eq:det_cond} directly follows.
	\end{proof}
	
	\subsection{Time localisation}
	To achieve localisation in time, we assume $\kappa$ to have a defect in its temporal periodicity, namely for
	\begin{align}
		\kappa(x,t)=\begin{cases}
			\kappa_0,&x\notin D;\\
			\kappa_{\mathrm{r}}\kappa_i(t),&x\in D^m_i, 
		\end{cases}
	\end{align}
	we assume 
	\begin{align}\label{eq:kappa_nonper}
		\frac{1}{\kappa_i(t)}=\frac{1}{\kappa_i^p(t)}+c_if(t)=\sum\limits_{m=-M}^Mk_{i,m}\mathrm{e}^{\mathrm{i}m\Omega t}+c_if(t),
	\end{align}
	for a function $f(t)$ which decays sufficiently fast in time.\par 
	Under the above assumption, the total wave field $u(x,t)$ can be expressed through the inverse Fourier transform as follows:
	\begin{align}
		u(x,t)=\mathcal{F}^{-1}\left[v(x,\omega)\right]=\int_{-\infty}^{\infty}v(x,\omega)\mathrm{e}^{\mathrm{i}\omega t}\,\mathrm{d}\omega.
	\end{align}
	Analogously to \cite[Lemma 4.1]{erik_JCP}, we prove the following result:
	\begin{lemma}\label{lemma:v_const}
		As $\delta\to0$, the functions $v(x,\omega)$ are approximately constant in $x$ inside each resonator $D_i^m$:
		\begin{align}
			v(x,\omega)=v^m_i(\omega)+O(\delta^{(1-\gamma)/2}),\quad x\in D_i^m,
		\end{align}
		for some $0<\gamma<1$. We write $u^m_i(t)=\mathcal{F}^{-1}\left[v^m_i(\omega)\right]$.
	\end{lemma}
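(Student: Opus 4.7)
The plan is to mirror the argument of \cite[Lemma 4.1]{erik_JCP}, which establishes the analogous statement in the Floquet--Bloch / Fourier series setting, but adapt it to the continuous Fourier transform framework forced on us by the fact that the temporal defect \eqref{eq:kappa_nonper} destroys the exact $T$-periodicity. The overall strategy is standard: show that the normal derivative of $u$ inside each resonator is $O(\delta)$, then integrate twice across the resonator of bounded length $\ell_i$ to conclude that $u$, and hence $v$, is nearly constant in $x$.

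First I would Fourier-transform the interior PDE in \eqref{eq:dD_system_u} in time. Writing $1/\kappa_i(t)$ as in \eqref{eq:kappa_nonper} and applying $\mathcal{F}$, the interior equation on $D_i^m$ becomes a convolution equation of the form
\begin{equation*}
\frac{\mathrm{d}^2}{\mathrm{d}x^2}v(x,\omega) + \frac{1}{v_{\mathrm{r}}^2}\,\omega \!\!\sum_{n=-M}^{M} k_{i,n}(\omega-n\Omega)\,v(x,\omega-n\Omega) + \frac{c_i}{v_{\mathrm{r}}^2}\,\omega\bigl(\hat f * (\omega \, v(x,\cdot))\bigr)(\omega) = 0,
\end{equation*}
where $\hat f$ is the Fourier transform of the defect profile $f(t)$, which is a Schwartz-class (or at least $L^1\cap L^\infty$) symbol by the sufficiently fast decay assumption. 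Analogously, the transmission condition on the first derivative yields
\begin{equation*}
\frac{\partial v_i^{*}}{\partial x}\bigg|_{\mp}(x_i^{\pm,m},\omega) = \delta \,\frac{\partial v}{\partial x}\bigg|_{\pm}(x_i^{\pm,m},\omega),
\end{equation*}
where $v_i^{*}$ denotes the appropriate convolution of $v$ with the $\mathcal{F}$-image of $1/s_i(t)$.

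Second, I would estimate the exterior field. Outside $D$ the Fourier-transformed equation reduces to the Helmholtz equation with wave number $\omega/v_0$, and the outgoing radiation condition fixes a unique solution whose derivative at the resonator boundaries is bounded uniformly in $\delta$. Combined with the above transmission relation this gives $\partial_x v_i^*|_{\mp}(x_i^{\pm,m},\omega) = O(\delta)$. Integrating the interior equation once over $(x_i^{-,m},x)$ and invoking these boundary bounds, we obtain $\partial_x v(x,\omega) = O(\delta)$ uniformly for $x\in D_i^m$, up to an operator acting on $v$ that is bounded for frequencies confined to the subwavelength regime. Integrating once more over the interval $D_i^m$ of length $\ell_i$ then yields $v(x,\omega) = v_i^m(\omega) + O(\delta)$, where $v_i^m(\omega)$ is defined as the averaged (or left-endpoint) value.

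Third, the weaker exponent $(1-\gamma)/2$ arises, exactly as in \cite[Lemma 4.1]{erik_JCP}, from the interaction between the truncation parameter $M=O(\delta^{-\gamma/2})$ in the Fourier expansion of $1/\kappa_i^p$ and the factors of $\omega+n\Omega$ with $|n|\le M$ appearing in the convolution. The convolution sum in the interior equation contributes a factor $(M\Omega)^2\sim \delta^{-\gamma}$ in the naive bound on the quadratic frequency term, and absorbing this into the $O(\delta)$ estimate from the derivative bound produces $O(\delta^{1-\gamma})$ in the $L^2$-type norm; taking square roots (as is done in the energy estimate of \cite{erik_JCP}) gives the announced $O(\delta^{(1-\gamma)/2})$. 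The main technical obstacle is handling the defect term $c_i\hat f*$: one must verify that because $\hat f$ is a bounded convolution kernel with sufficient decay, it contributes at worst at the same order as the truncated periodic part, and in particular does not enlarge the error exponent. Once this is checked, the conclusion follows verbatim from the argument of \cite[Lemma 4.1]{erik_JCP}, and taking $u_i^m(t)=\mathcal{F}^{-1}[v_i^m(\omega)]$ gives the stated identification.
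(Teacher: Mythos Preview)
Your proposal is correct and takes essentially the same approach as the paper, which in fact gives no explicit proof at all but simply invokes the analogy with \cite[Lemma~4.1]{erik_JCP}; your sketch of how that argument carries over to the continuous Fourier transform setting (and your remark that the defect convolution kernel $\hat f$ contributes at the same order as the truncated periodic part) is already more detailed than what the paper provides. One minor caveat: your heuristic $(M\Omega)^2\sim\delta^{-\gamma}$ tacitly treats $\Omega$ as $O(1)$, whereas in the subwavelength regime of \cite{erik_JCP} one has $\Omega=O(\sqrt{\delta})$, so the bookkeeping leading to the exponent $(1-\gamma)/2$ should be read accordingly.
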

	Based on this, we can prove the following proposition:
	\begin{prop}\label{prop:cap_vmi_st}
		Any localised solution $u$ to \eqref{eq:dD_system_u} corresponding to a subwavelength quasifrequency $\omega$ satisfies the following ODE:
		\begin{align}
			\sum\limits_{j=1}^N\frac{1}{\srs_j(t)}C_{ij}^{\alpha}u_j(t)=\frac{\rho_{\mathrm{r}}\ell_i}{\delta\kappa_{\mathrm{r}}\srs_i(t)}\frac{\mathrm{d} }{\mathrm{d} t}\left(\left(\frac{1}{\kappa_i^p(t)}+c_if(t)\right)\frac{\mathrm{d}}{\mathrm{d} t}u_i(t)\right),
		\end{align}
		for all $i=1,\dots,N$.
	\end{prop}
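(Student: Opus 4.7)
My plan is to follow the same scaffolding as the derivation of the periodic ODE \eqref{eq:ODEperiodic}, modified in two ways: (i) because $1/\kappa_i(t)$ is no longer periodic, I pass to the continuous Fourier transform in $t$ instead of a Fourier series; and (ii) the transmission conditions still involve only the periodic factor $\srs_i(t)$, and the spatial lattice is unchanged, so the capacitance machinery of Section \ref{sec:cap_approx}, including Lemmas \ref{lemma:vn_const} and \ref{lemma:vn_sum} as well as the Floquet-Bloch variable $\alpha$, carries over with essentially no change.

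Concretely, I first apply the Fourier transform in $t$ to \eqref{eq:dD_system_u}. Outside $D$, $v(\cdot,\omega)$ obeys the usual Helmholtz equation with wave number $\omega/v_0$. Inside $D_i$, using the decomposition \eqref{eq:kappa_nonper}, the operator $\partial_t[\kappa_i(t)^{-1}\partial_t]$ transforms into the sum of a discrete convolution in $\omega$ (from the Fourier coefficients $k_{i,m}$ of $1/\kappa_i^p(t)$) and a continuous convolution with $\hat f(\omega)$ (from the defect $c_i f(t)$). The transmission conditions, since they only involve $\srs_i(t)$, retain the same Fourier-series form as in \eqref{eq:dD_system_vn}.

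Next, I apply $I_{\partial D_i}$ to $v(\cdot,\omega)$: the transmission condition converts the outer derivatives into an interior integral of $\partial_x^2 v$, which by the modified interior equation above is expressible in terms of the discrete/continuous convolutions just identified. Using Lemma \ref{lemma:v_const} I replace $v(x,\omega)$ by the resonator-average $v_i(\omega)$, so $\int_{D_i} v\,\mathrm{d}x\approx \ell_i v_i(\omega)$. Independently, the straightforward continuous-$\omega$ analogue of Lemma \ref{lemma:vn_sum} provides a second expression $I_{\partial D_i}[v]=\sum_{j,m}\srsf_{j,m}\,v_j(\omega-m\Omega)\,C_{ij}^{\alpha}$. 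Equating the two expressions produces the continuous analogue of \eqref{eq:pre_capapprox}.

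Finally, I invert the Fourier transform. The sum $\sum_m \srsf_{j,m} v_j(\omega-m\Omega)$ on the left un-transforms to $u_j(t)/\srs_j(t)$, producing the factor $1/\srs_j(t)$ in the claim. On the right, the composition of the $\srs_i$-convolution (from the transmission step) and the mixed $k_i+c_i\hat f$ convolution (from the interior equation) un-transforms, by bilinearity of convolution and the convolution theorem, to $\frac{1}{\srs_i(t)}\frac{\mathrm{d}}{\mathrm{d}t}\bigl[(1/\kappa_i^p(t)+c_i f(t))\,\mathrm{d}u_i/\mathrm{d}t\bigr]$, multiplied by the subwavelength prefactor $\rho_{\mathrm{r}}\ell_i/(\delta\kappa_{\mathrm{r}})$, which is the stated ODE. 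The main obstacle will be ensuring that these mixed discrete-plus-continuous convolutions in $\omega$ recombine cleanly into the product $(1/\kappa_i^p+c_i f)\,u_i'$ after inversion; this is essentially bookkeeping with the convolution theorem, using that the inverse transforms of $\sum_m k_{i,m}\mathrm{e}^{\mathrm{i}m\Omega t}$ and of $c_i\hat f(\omega)$ are exactly the two summands of $1/\kappa_i(t)$ in \eqref{eq:kappa_nonper}.
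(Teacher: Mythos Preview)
Your proposal is correct and follows essentially the same route as the paper: apply the continuous Fourier transform in $t$ to \eqref{eq:dD_system_u}, evaluate $I_{\partial D_i}$ in two ways (once via the transmission/interior equations together with Lemma~\ref{lemma:v_const}, once via the capacitance matrix as in Lemma~\ref{lemma:vn_sum}), and then invert using the convolution theorem to recover the factors $1/\srs_j(t)$ and $1/\kappa_i^p(t)+c_i f(t)$. The only cosmetic difference is that the paper invokes the capacitance matrix directly rather than phrasing it as a continuous-$\omega$ analogue of Lemma~\ref{lemma:vn_sum}, but this is the same step.
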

	\begin{proof}
		Applying the Fourier transform $\mathcal{F}$ to \eqref{eq:dD_system_u} leads to
		\begin{align}\label{eq:dD_system_u_Fourier}
			\left\{
			\begin{array} {ll}
				\ds -\frac{\omega^2}{\left(v_0\right)^2}v(x,\omega) - \Delta v(x,\omega) = 0, \quad &x\notin D, \\[1em]
				\ds \frac{\mathrm{i}\omega}{\left(v_{\mathrm{r}}\right)^2}\left(\sum\limits_{n=-M}^M\tilde{k}_{i,n}(\omega+n\Omega)v(x,\omega+n\Omega)+\tilde{c}_i\int_{-\infty}^{\infty}\hat{f}(\tau)\mathrm{i}(\omega-\tau)v(x,\omega-\tau)\,\mathrm{d}\tau\right) \\
				\qquad\qquad -  \Delta v(x,\omega) = 0, \quad &x\in D_i^m,\\[1em]
				\ds v|_{\pm}(x_i^{\pm},\omega) = \sum\limits_{n=-M}^M\tilde{\srsf}_{i,n}v|_{\mp}(x_i^{\pm},\omega+n\Omega), \quad &i=1,\dots,N, \\[1em] 	
				\sum\limits_{n=-M}^M\tilde{\srsf}_{i,n}\ds \frac{\p v}{\p x }\bigg|_{\mp}(x_i^{\pm},\omega+n\Omega) = \delta\frac{\p v}{\p x }\bigg|_{\pm}(x_i^{\pm},\omega), &i=1,\dots,N, \\[1em]
			\end{array}		
			\right.
		\end{align}
		where $\hat{f}(\omega):=\mathcal{F}\left[f(t)\right]$. The coefficients $\tilde{k}_{i,n}$ and $\tilde{\srsf}_{i,n}$ denote the coefficients of the Fourier series of $1/\kappa_i(t)$ and $1/\srs_i(t)$, respectively, multiplied by constants emerging from applying the Fourier transform. Applying the operator $I_{\p D_i}$ to $v$, we obtain
		\begin{align}
			I_{\p D_i}[v]&=-\frac{1}{\delta}\sum\limits_{n=-M}^M\tilde{\srsf}_{i,n}\int_{x_i^-}^{x_i^+}\frac{\p^2 v(x,\omega+n\Omega)}{\p x^2}\,\mathrm{d}x\nonumber\\
			&=\frac{\rho_{\mathrm{r}}}{\kappa_{\mathrm{r}}\delta}\sum\limits_{n=-M}^M\tilde{\srsf}_{i,n}\mathrm{i}(\omega+n\Omega)\left(\sum\limits_{l=-M}^M\tilde{k}_{i,l}(\omega+(n+l)\Omega)\int_{x_i^-}^{x_i^+}v(x,\omega+(n+l)\Omega)\,\mathrm{d}x\right.\nonumber\\
			&\qquad\qquad\left.+\tilde{c}_i\int_{-\infty}^{\infty}\hat{f}(\tau)\mathrm{i}(\omega+n\Omega-\tau)\int_{x_i^-}^{x_i^+}v(x,\omega+n\Omega-\tau)\,\mathrm{d}x\,\mathrm{d}\tau\right)\nonumber\\
			&=\frac{\ell_i}{\delta\left(v_{\mathrm{r}}\right)^2}\sum\limits_{n=-M}^M\tilde{\srsf}_{i,n}\mathrm{i}(\omega+n\Omega)\left(\sum\limits_{l=-M}^M\tilde{k}_{i,l}(\omega+(n+l)\Omega)v_i(\omega+(n+l)\Omega)\right.\nonumber\\
			&\qquad\qquad\left.+\tilde{c}_i\int_{-\infty}^{\infty}\hat{f}(\tau)\mathrm{i}(\omega+n\Omega-\tau)v_i(\omega+n\Omega-\tau)\,\mathrm{d}\tau\right).
		\end{align}
		In the last step we used the result from Lemma \ref{lemma:v_const}.\par  
		On the other hand, using the definition of the capacitance matrix allows us to write
		\begin{align}
			I_{\partial D_i}[v^{\alpha}]=\sum\limits_{n=-\infty}^{\infty}\sum\limits_{j=1}^N\tilde{\srsf}_{i,n}C_{ij}^{\alpha}v_j(\omega+n\Omega).
		\end{align}
		By applying the inverse Fourier transform $\mathcal{F}^{-1}$ to both expressions of $I_{\p D_j}[v]$, we obtain the desired result using the convolution rule for the Fourier transform.
	\end{proof}
	Next, we rewrite this system of ODEs as a single vector ODE with the matrices 
	\begin{align}
		\Srs(t):=\mathrm{diag}\left(\srs_i(t)\right)_{i=1}^N,\quad K(t):=\mathrm{diag}\left(\kappa_i^p(t)\right)_{i=1}^N,\quad \widetilde{C}(t):=\mathrm{diag}\left(c_if(t)\right)_{i=1}^N
	\end{align}
	and the vector
	\begin{align}
		\mathbf{v}(\omega):=\left[v_i(\omega)\right]_{i=1}^N,\quad \mathbf{u}(t):=\int_{-\infty}^{\infty}\mathbf{v}(\omega)\mathrm{e}^{\mathrm{i}\omega t}\,\mathrm{d}\omega.
	\end{align}
	Using this notation, we obtain
	\begin{align}\label{eq:v_timedefect_ODE}
		\Srs^{-1}(t)\mathcal{C}^{\alpha}\mathbf{u}(t)=\Srs^{-1}(t)\frac{\mathrm{d}}{\mathrm{d}t}\left(\left(K^{-1}(t)+\widetilde{C}(t)\right)\frac{\mathrm{d}}{\mathrm{d}t}\mathbf{u}(t)\right).
	\end{align}
	
	\subsection{Space-time localisation}
	To achieve localisation in space and time simultaneously, we assume $\kappa$ to have a defect in time and $\rho$ a defect in space. We assume $\kappa_i(t)$ to be defined as in \eqref{eq:kappa_nonper} and $\rho$ with a defect in space as stated in \eqref{eq:rho_nonper}. Next, we generalise \cite[Proposition 4.2]{ammarihiltunen2020edge} and obtain the following result:
	\begin{prop}\label{prop:cap_vmi_st2}
		Any localised solution $u$ to \eqref{eq:dD_system_u}, corresponding to a subwavelength quasifrequency $\omega$, satisfies the following ODE:
		\begin{align}
			\sum\limits_{j=1}^N\frac{1}{\srs_j(t)}C_{ij}^{\alpha}\sum\limits_{m=-\infty}^{\infty}u^m_j(t)\mathrm{e}^{\mathrm{i}\alpha mL}=\frac{\ell_i}{\kappa_{\mathrm{r}}\srs_i(t)}\frac{\mathrm{d} }{\mathrm{d} t}\left(\left(\frac{1}{\kappa_i^p(t)}+c_if(t)\right)\frac{\mathrm{d}}{\mathrm{d} t}\sum\limits_{m=-\infty}^{\infty}u^m_i(t)\frac{\mathrm{e}^{\mathrm{i}\alpha mL}\rho_{\mathrm{r}}^{m,i}}{\delta^m}\right),
		\end{align}
		for all $i=1,\dots,N$.
	\end{prop}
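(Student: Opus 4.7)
The plan is to combine the two transforms already used in the earlier propositions: the Fourier transform in time $\mathcal{F}$ (as in Proposition \ref{prop:cap_vmi_st}, to handle the temporal defect $1/\kappa_i(t)=1/\kappa_i^p(t)+c_if(t)$) together with the Floquet--Bloch transform $\mathcal{I}$ in the lattice direction (as in Proposition \ref{prop:cap_vmi}, to handle the spatial defect $\rho_{\mathrm{r}}^{m,i}$). Starting from \eqref{eq:dD_system_u} with both defects activated, the idea is to first apply $\mathcal{F}$ in $t$, producing a system in $v(x,\omega)$ whose interior equation on $D_i^m$ contains both the convolution-type term $\sum_n \tilde k_{i,n}(\omega+n\Omega)v(x,\omega+n\Omega)$ and the additional convolution $\tilde c_i\!\int\!\hat f(\tau)\mathrm{i}(\omega-\tau)v(x,\omega-\tau)\,\mathrm{d}\tau$ arising from the defect $c_if(t)$, exactly as in \eqref{eq:dD_system_u_Fourier}.

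Next, I would apply $\mathcal{I}$ to obtain $v^{\alpha}(x,\omega)=\mathcal{I}[v(x,\omega)](\alpha)$. Because the spatial defect enters through $\rho_{\mathrm{r}}^{m,i}/\delta^m$, its Floquet--Bloch transform will distribute as a weighted lattice sum $\sum_{m}\mathrm{e}^{\mathrm{i}\alpha mL}\rho_{\mathrm{r}}^{m,i}$ inside the interior equation and $\sum_{m}\delta^m\mathrm{e}^{\mathrm{i}\alpha mL}$ in the transmission condition, in direct analogy with \eqref{eq:dD_system_vna_nonper}. Applying $I_{\partial D_j}$ to $v^{\alpha}(x,\omega)$, using the transmission condition, and invoking the Floquet--Bloch version of Lemma \ref{lemma:v_const} to replace $v^{\alpha}(x,\omega+n\Omega)$ by its resonator-constant value times the lattice sum, turns the integral over $D_j$ into $\ell_j$ times the expression involving both $1/\kappa_j^p$ and $c_jf$ contributions weighted by $\sum_m \mathrm{e}^{\mathrm{i}\alpha mL}\rho_{\mathrm{r}}^{m,j}/\delta^m$.

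In parallel, expanding $v^{\alpha}$ in the basis $V_i^{\alpha}$ from \eqref{def:Vialpha} as in Lemma \ref{lemma:vn_sum} and Proposition \ref{prop:cap_vmi}, and again applying $I_{\partial D_j}$, gives the dual representation $I_{\partial D_j}[v^{\alpha}]=\sum_{n}\sum_{i=1}^N \tilde{\srsf}_{i,n}C_{ij}^{\alpha}v_i(\omega+n\Omega)$. Equating the two expressions for $I_{\partial D_j}[v^{\alpha}]$ yields an identity in $\omega$. Taking the inverse Fourier transform $\mathcal{F}^{-1}$ of this identity, and using that multiplication in frequency by the Fourier coefficients of $1/\srs_i$, $1/\kappa_i^p$ together with convolution against $\hat f$ become, in the time domain, multiplication by $1/\srs_i(t)$ and by $\bigl(1/\kappa_i^p(t)+c_if(t)\bigr)$, followed by grouping the $\mathrm{i}(\omega+n\Omega)$ factors into the $\mathrm{d}/\mathrm{d}t$ operators, produces the stated ODE with $u_j^m(t)=\mathcal{F}^{-1}[v_j^m(\omega)]$ and the weights $\mathrm{e}^{\mathrm{i}\alpha mL}\rho_{\mathrm{r}}^{m,j}/\delta^m$ appearing inside the inner time derivative.

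The main obstacle will be bookkeeping rather than any genuinely new estimate: the defect term $c_if(t)$ must be tracked through both $\mathcal{F}$ (where it becomes a convolution with $\hat f$) and $\mathcal{I}$ (where it must commute with the lattice sum carrying $\rho_{\mathrm{r}}^{m,i}/\delta^m$) without spawning spurious cross-terms, and the final grouping must correctly place the $1/\kappa_i^p(t)+c_if(t)$ factor between the two time derivatives so that the result matches the form of Propositions \ref{prop:cap_vmi} and \ref{prop:cap_vmi_st} simultaneously. Once this is done carefully, the conclusion follows by the same linearity-of-$\mathcal{F}^{-1}$ argument used at the end of Proposition \ref{prop:cap_vmi_st}, and no further approximation beyond Lemma \ref{lemma:v_const} is needed.
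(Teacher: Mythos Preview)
Your proposal is correct and follows essentially the same route as the paper's proof: apply the Fourier transform in time to \eqref{eq:dD_system_u} (producing the convolution with $\hat f$), then the Floquet--Bloch transform in space (producing the weighted lattice sums $\sum_m \mathrm{e}^{\mathrm{i}\alpha mL}\rho_{\mathrm{r}}^{m,i}/\delta^m$), apply $I_{\partial D_i}$ and use Lemma~\ref{lemma:v_const} to obtain one expression while the capacitance-matrix expansion gives the other, and finally invert the Fourier transform via the convolution rule. The paper carries out exactly these steps, and your identification of the main difficulty as bookkeeping rather than a new estimate is accurate.
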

	\begin{proof}
		The proof follows the same method as that of \Cref{prop:conv_cap_real} and \Cref{prop:cap_vmi_st}. We consider a system of infinitely many unit cells, each containing $N$ resonators $\left(D_i^m\right)_{i=1,\dots,N}$ for all $m\in\mathbb{Z}$ and the wave propagation to be governed by \eqref{eq:dD_system_u}. Applying the Fourier transform $\mathcal{F}$ to \eqref{eq:dD_system_u} leads to \eqref{eq:dD_system_u_Fourier}.\par 
		Next, we apply the Floquet-Bloch transform to \eqref{eq:dD_system_u_Fourier} and obtain
		\begin{align}\label{eq:dD_system_u_Fourier_FloquetBloch}
			\left\{
			\begin{array} {ll}
				\ds -\frac{\omega^2}{\left(v_0\right)^2}v^{\alpha}(x,\omega) - \Delta v^{\alpha}(x,\omega) = 0, & x\notin D, \\[1em]
				\ds \frac{\mathrm{i}\omega}{\kappa_{\mathrm{r}}}\sum\limits_{m=-\infty}^{\infty}\rho_{\mathrm{r}}^{m,i}\mathrm{e}^{\mathrm{i}\alpha mL}\Bigg(\sum\limits_{n=-M}^M\tilde{k}_{i,n}(\omega+n\Omega)v^{\alpha}(x,\omega+n\Omega)\\
				\ds \qquad\qquad +\tilde{c}_i\int_{-\infty}^{\infty}\hat{f}(\tau)\mathrm{i}(\omega-\tau)v^{\alpha}(x,\omega-\tau)\,\mathrm{d}\tau\Bigg) -  \Delta v^{\alpha}(x,\omega) = 0, & x\in D_i,\\[1em]
				\ds v^{\alpha}|_{\pm}(x_i^{\pm},\omega) = \sum\limits_{n=-M}^M\tilde{\srsf}_{i,n}v^{\alpha}|_{\mp}(x_i^{\pm},\omega+n\Omega), & i=1,\dots,N, \\[1em] 	
				\sum\limits_{n=-M}^M\tilde{\srsf}_{i,n}\ds \frac{\p v^{\alpha}}{\p x }\bigg|_{\mp}(x_i^{\pm},\omega+n\Omega) = \sum\limits_{m=-\infty}^{\infty}\delta^m\mathrm{e}^{\mathrm{i}\alpha mL}\frac{\p v^{\alpha}}{\p x }\bigg|_{\pm}(x_i^{\pm},\omega), &i=1,\dots,N. \\[1em]
			\end{array}		
			\right.
		\end{align}
		Next, we apply the operator $I_{\partial D_i}$ to $v^{\alpha}$:
		\begin{align}
			I_{\partial D_i}[v^{\alpha}]&=-\frac{1}{\delta}\sum\limits_{n=-M}^M\tilde{\srsf}_{i,n}\int_{x_i^-}^{x_i^+}\frac{\p^2 v^{\alpha}(x,\omega+n\Omega)}{\p x^2}\,\mathrm{d}x\nonumber\\
			&=\frac{1}{\kappa_{\mathrm{r}}}\sum\limits_{n=-M}^M\tilde{\srsf}_{i,n}\mathrm{i}(\omega+n\Omega)\left(\sum\limits_{l=-M}^M\tilde{k}_{i,l}(\omega+(n+l)\Omega)\int_{x_i^-}^{x_i^+}v^{\alpha}(x,\omega+(n+l)\Omega)\,\mathrm{d}x\right.\nonumber\\
			&\left.\qquad\qquad+\tilde{c}_i\int_{-\infty}^{\infty}\hat{f}(\tau)\mathrm{i}(\omega+n\Omega-\tau)\int_{x_i^-}^{x_i^+}v^{\alpha}(x,\omega+n\Omega-\tau)\,\mathrm{d}x\,\mathrm{d}\tau\right)\sum\limits_{m=-\infty}^{\infty}\frac{\mathrm{e}^{\mathrm{i}\alpha mL}\rho_{\mathrm{r}}^{m,i}}{\delta^m}\nonumber\\
			&=\frac{\ell_i}{\kappa_{\mathrm{r}}}\sum\limits_{n=-M}^M\tilde{\srsf}_{i,n}\mathrm{i}(\omega+n\Omega)\left(\sum\limits_{l=-M}^M\tilde{k}_{i,l}(\omega+(n+l)\Omega)\sum\limits_{m=-\infty}^{\infty}\frac{v^{m}_i(\omega+(n+l)\Omega)\mathrm{e}^{\mathrm{i}\alpha mL}\rho_{\mathrm{r}}^{m,i}}{\delta^m}\right.\nonumber\\
			&\left.\qquad\qquad+\tilde{c}_i\int_{-\infty}^{\infty}\hat{f}(\tau)\mathrm{i}(\omega+n\Omega-\tau)\sum\limits_{m=-\infty}^{\infty}\frac{v_i^{m}(\omega+n\Omega-\tau)\mathrm{e}^{\mathrm{i}\alpha mL}\rho_{\mathrm{r}}^{m,i}}{\delta^m}\,\mathrm{d}\tau\right).
		\end{align}
		In the last step we used the result from Lemma \ref{lemma:v_const}.\par  
		On the other hand, using the definition of the capacitance matrix allows us to write
		\begin{align}
			I_{\partial D_i}[v^{\alpha}]=\sum\limits_{n=-\infty}^{\infty}\sum\limits_{j=1}^N\tilde{\srsf}_{i,n}C_{ij}^{\alpha}\sum\limits_{n=-\infty}^{\infty}v_j^m(\omega+l\Omega)\mathrm{e}^{\mathrm{i}\alpha mL}.
		\end{align}
		By applying the inverse Fourier transform $\mathcal{F}^{-1}$ to both expressions of $I_{\partial D_j}[v^{\alpha}]$, we obtain the desired result using the convolution rule for the Fourier transform.
	\end{proof}
	We use the same notation of $b^m_i$ as defined in \eqref{eq:bmi} with the matrices $B^m,\,\Srs(t),\,K(t),\,\widetilde{C}(t)$ 
	and the vector $\mathbf{v}^m(\omega):=\left[v^m_i(\omega)\right]_{i=1}^N$ as before.
	We shall now use the ODE derived in Proposition \ref{prop:cap_vmi_st2} and the multiplication-rule proved in Lemma \ref{lemma:multip} to provide a capacitance matrix approximation in the real space, rather than the dual space, as in the static case in \cite[Proposition 3.5]{ammari2023anderson}.
	\begin{prop}\label{prop:conv_cap_real2}
		Any localised solution to \eqref{eq:dD_system_u}, corresponding to a subwavelength frequency $\omega$ satisfies
		\begin{align}\label{eq:conv_cap_real_st}
			B^m\sum\limits_{n\in\Lambda}\mathcal{C}^{m-n}\int_{-\infty}^{\infty} 
			\Srs^{-1}(t) \mathbf{v}^n(\omega)\mathrm{e}^{\mathrm{i}\omega t}\,\mathrm{d}\omega=\Srs^{-1}(t)\frac{\mathrm{d}}{\mathrm{d}t}\left(\left( K^{-1}(t)+ \widetilde{C}(t)\right)\frac{\mathrm{d}}{\mathrm{d}t}\left(\int_{-\infty}^{\infty}\mathbf{v}^m(\omega)\mathrm{e}^{\mathrm{i}\omega t}\,\mathrm{d}\omega\right)\right),
		\end{align}
		for every $m\in\Lambda$, where $\mathcal{C}^m=\mathcal{I}^{-1}[\mathcal{C}^{\alpha}](m)$.
	\end{prop}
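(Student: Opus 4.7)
The plan is to follow the same strategy as in the proof of \Cref{prop:conv_cap_real}, with the Fourier series in time replaced by the Fourier integral since the temporal periodicity is now broken by the defect term $c_i f(t)$. The result of \Cref{prop:cap_vmi_st2} already delivers the needed ODE in the dual variable $\alpha$; what remains is the algebraic step of repackaging it into vector form and inverting the Floquet--Bloch transform.

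First, I would rewrite the scalar identity from \Cref{prop:cap_vmi_st2} as a matrix equation. Multiplying the $i$-th equation by $\delta\kappa_{\mathrm{r}}/(\ell_i\rho_{\mathrm{r}}^i)$ and using the defining relation \eqref{eq:bmi} between $\rho_{\mathrm{r}}^{m,i}/\delta^m$ and $b_i^m$, the factor $\rho_{\mathrm{r}}^{m,i}/\delta^m$ on the right is absorbed into the diagonal matrix $B^m$ acting on the vector $\mathbf{v}^m(\omega)$. After collecting the prefactor $\delta\kappa_{\mathrm{r}}/\rho_{\mathrm{r}}$ with the inverse-length matrix $\mathcal{L}$ and with $C^{\alpha}$ into the generalised capacitance matrix $\mathcal{C}^{\alpha}$, the system across $i=1,\dots,N$ becomes the vector identity
\begin{align}
\mathcal{C}^{\alpha}\,\Srs^{-1}(t)\sum_{m\in\Lambda}\mathbf{v}^{\alpha}(t)\,\mathrm{e}^{\mathrm{i}\alpha mL}
= \Srs^{-1}(t)\frac{\mathrm{d}}{\mathrm{d}t}\!\left(\bigl(K^{-1}(t)+\widetilde{C}(t)\bigr)\frac{\mathrm{d}}{\mathrm{d}t}\sum_{m\in\Lambda} B^m\,\mathbf{u}^m(t)\,\mathrm{e}^{\mathrm{i}\alpha mL}\right),
\end{align}
where $\mathbf{u}^m(t)=\int_{-\infty}^{\infty}\mathbf{v}^m(\omega)\mathrm{e}^{\mathrm{i}\omega t}\,\mathrm{d}\omega$. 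This is the precise space-perturbation analogue of \eqref{eq:pre_capreal}, with the Fourier series over $n\in\mathbb{Z}$ replaced by the Fourier integral over $\omega\in\mathbb{R}$.

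Next, I would apply the inverse Floquet--Bloch transform $\mathcal{I}^{-1}$ in $\alpha$ to both sides. The right-hand side is a pure $\alpha$-Fourier mode multiplied by $B^m$, so $\mathcal{I}^{-1}$ simply picks out the $m$-th coefficient, yielding the right-hand side of \eqref{eq:conv_cap_real_st}. The left-hand side is a product of two $\alpha$-dependent factors, namely $\mathcal{C}^{\alpha}$ and $\mathcal{I}[\Srs^{-1}(t)\mathbf{u}^{\cdot}(t)](\alpha)$, and here I would invoke \Cref{lemma:multip} to turn the product into the lattice convolution $\sum_{n\in\Lambda}\mathcal{C}^{m-n}\int_{-\infty}^{\infty}\Srs^{-1}(t)\mathbf{v}^n(\omega)\mathrm{e}^{\mathrm{i}\omega t}\,\mathrm{d}\omega$, multiplied on the left by $B^m$ that came from the right-hand side once both sides are matched index-by-index. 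Because $\Srs^{-1}(t)$, $K^{-1}(t)$ and $\widetilde{C}(t)$ do not depend on $\alpha$, they commute with $\mathcal{I}^{-1}$ and can be pulled out unchanged.

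The main obstacle, which is conceptual rather than computational, is bookkeeping: one must carefully check that the time-defect term $c_i f(t)$ enters identically on the real-space side, since it sits inside the temporal derivative on the right and plays no role in the Floquet--Bloch inversion in $\alpha$. This is immediate from the fact that $\widetilde{C}(t)$ is $\alpha$-independent. The remaining steps, namely the algebraic rearrangement into $\mathcal{C}^{\alpha}$ and the application of \Cref{lemma:multip}, are essentially mechanical once the vector formulation is set up, and they mirror the proof of \Cref{prop:conv_cap_real} verbatim with the lone substitution of the Fourier integral for the Fourier series.
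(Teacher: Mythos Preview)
Your approach is essentially identical to the paper's: rewrite \Cref{prop:cap_vmi_st2} in vector form, then apply $\mathcal{I}^{-1}$ together with \Cref{lemma:multip}. There is, however, a bookkeeping slip in your displayed intermediate equation. From \eqref{eq:bmi} one has $\rho_{\mathrm{r}}^{m,i}/\delta^m = (\rho_{\mathrm{r}}^i/\delta)\,(b_i^m)^{-1}$, so after absorbing the prefactors into $\mathcal{C}^{\alpha}$ the right-hand side carries $(B^m)^{-1}$, not $B^m$; the paper's intermediate equation \eqref{eq:pre_capreal_st} has exactly $(B^m)^{-1}$ inside the temporal derivatives. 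After applying $\mathcal{I}^{-1}$ and pulling the time-independent diagonal matrix $(B^m)^{-1}$ outside (it commutes with $\Srs^{-1}$, $K^{-1}$, $\widetilde{C}$), one then multiplies both sides by $B^m$ to obtain \eqref{eq:conv_cap_real_st}. Your phrase ``multiplied on the left by $B^m$ that came from the right-hand side'' glosses over this inversion step; as written, your intermediate equation would yield $(B^m)^{-1}$ on the left of the final identity rather than $B^m$. Once this sign of the exponent on $B^m$ is corrected, the argument goes through exactly as you outline.
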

	\begin{proof}
		If we assume the material to be perturbed in space, the formulation obtained in Proposition \ref{prop:cap_vmi_st} can be rewritten as
		\begin{align}\label{eq:pre_capreal_st}
			&\mathcal{C}^{\alpha}\sum\limits_{m\in\mathbb{Z}}\int_{-\infty}^{\infty} \Srs^{-1}(t) \mathbf{v}^m(\omega) \mathrm{e}^{\mathrm{i}\omega t}\,\mathrm{d}\omega\,\mathrm{e}^{\mathrm{i}\alpha mL}\nonumber\\
			&\qquad=\Srs^{-1}(t)\frac{\mathrm{d}}{\mathrm{d}t}\left(\left(K^{-1}(t)+ \widetilde{C}(t)\right)\frac{\mathrm{d}}{\mathrm{d}t}\sum\limits_{m\in\mathbb{Z}}\int_{-\infty}^{\infty}
			\left(B^m\right)^{-1} \mathbf{v}^{m}(\omega)\mathrm{e}^{\mathrm{i}\omega t}\,\mathrm{d}\omega\,\mathrm{e}^{\mathrm{i}\alpha mL}\right),
		\end{align}
		where $\mathcal{C}^{\alpha}$ is the generalised capacitance matrix. Lastly, apply the inverse Floquet-Bloch transform and use Lemma \ref{lemma:multip} to obtain the desired result.
	\end{proof}
	Equivalently, \eqref{eq:conv_cap_real_st} can be expressed more concisely as a vector ODE with the previously introduced matrices $\mathcal{C},\,B,\,\Srsc(t),\,\mathcal{K}(t)$, the infinitely diagonal matrix $\widetilde{C}(t)$ and the vector
	\begin{align}
		\mathbf{u}(t):=\begin{bsmallmatrix}
			\vdots\\\mathbf{u}^{-1}(t)\\\mathbf{u}^{0}(t)\\\mathbf{u}^{1}(t)\\\vdots \\
		\end{bsmallmatrix}, \quad \mathbf{u}^m(t):=\int_{-\infty}^{\infty}\mathbf{v}^m(\omega)\mathrm{e}^{\mathrm{i}\omega t}\,\mathrm{d}\omega.
	\end{align}
	As in \eqref{eq:v_ODE}, we may write
	\begin{align}\label{eq:v_ODE_st}
		B\mathcal{C}^{\alpha}\Srsc^{-1}(t)\mathbf{u}(t)=\Srsc^{-1}(t)\frac{\mathrm{d}}{\mathrm{d}t}\left(\left(\mathcal{K}^{-1}(t)+\widetilde{C}(t)\right)\frac{\mathrm{d}}{\mathrm{d}t}\mathbf{u}(t)\right),\qquad \text{for}\,\,t\in\mathbb{R}.
	\end{align}
	This system can be rewritten into a system of first-order ODEs, which leads to the definition of a $2N\times2N$ - matrix $\mathcal{M}^{\alpha}(t)$, whose eigenvalues are denoted by $\lambda_i^{\alpha}$. Moreover, note that for the numerical implementation of this ODE one must truncate the system with a truncation parameter $K\in\mathbb{N}$.
	
	\section{Numerical experiments}\label{sec:numerical_exp}

	For the numerical computation of the eigenvectors $\mathbf{v}^{\alpha}$ we cannot assume the system to be infinitely periodic. Thus, we approximate it by a super-cell with many reoccurring cells, each containing $N$ resonators.\par 
	In the static case, we determine localisation through the eigenvectors of \eqref{eq:v_ODE_st} and plotting the square of the absolute value of their entries. With the introduction of time-dependent material parameters $\srs$ and $\kappa$, the capacitance matrix approximation formula is no longer a simple eigenvalue problem, but a system of ODEs, as given in \eqref{eq:capmat_ode}. To recognise the appearance of localised waves in time-modulated metamaterials, we must therefore adapt our definition of eigenmodes. We denote the eigenvectors corresponding to the first $N$ subwavelength resonant frequencies $\left(\omega_i^{\alpha}\right)_{i=1}^N$ by $\left(\mathbf{v}_i^{\alpha}\right)_{i=1}^N\subset\mathbb{C}^{N}$ and we shall use them to determine localised modes. We quantify the degree of localisation in the case of a defect in space as follows:
	\begin{definition}[Spatial Degree of Localisation]
		We define the spatial degree of localisation upon the introduction of a defect in space by
		\begin{align}
			d_i:=\frac{||\mathbf{v}^{\alpha}_i||_{\infty}}{||\mathbf{v}^{\alpha}_i||_{2}}.
		\end{align}
		We say that an eigenmode is localised if its corresponding $d_i$ is close to $1$ and we say it is delocalised if $d_i$ is close to $0$.
	\end{definition}
	Figure \ref{fig:deg_of_loc_space} displays the spatial degree of localisation for time-dependent resonators with a defect in space at the centre of the resonator array. The numerical results show that the time-modulation causes the frequency axis to fold, which means that the spatially localised frequencies can appear anywhere inside the spectrum. Moreover, the folded frequencies and bulk frequencies hybridise, producing multiple spatially localised modes upon the introduction of time-dependent material parameters. In the case of a static material we see in Figure \ref{fig:deg_of_loc_space} that there is a single localised mode which is decoupled from the bulk.  Interestingly, when we add time-modulation, rather than coupling to the bulk and becoming delocalised, this mode remains localised and causes additional bulk modes close to this point to be localised. 
	\begin{figure}[H]
		\centering
		\includegraphics[width=0.8\textwidth]{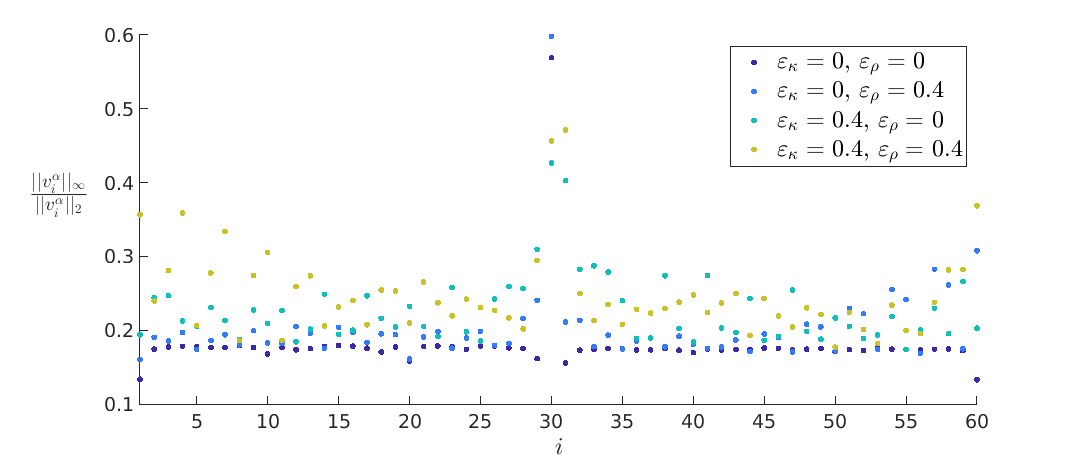}
		\caption{The spatial degree of localisation $d_i$ corresponding to $20$ reoccurring unit cells of $N=3$ resonators, where there is a defect in the wave speed inside $D^0_2$. Each resonator is of length $\ell=1$ and the spacing between two neighbouring resonators is $\ell_{ij}=2$.}
		\label{fig:deg_of_loc_space}
	\end{figure}
	\par
	Equivalently to the definition of $d_i$, we now introduce a measure for the temporal localisation. To influence the time instance at which the space localisation takes place, we implement the defect in time as follows:
	\begin{align}
		\frac{1}{\kappa_i(t)}=\sum\limits_{m=-M}^Mk_{i,m}\mathrm{e}^{\mathrm{i}m\Omega t}+c_i\mathrm{e}^{-\left(\frac{t}{t_0}-1\right)^2},\qquad \text{for}\,\,t_0\in[0,T],
	\end{align}
	\textit{i.e.} for fixed $t_0$, we chose $f(t):=\mathrm{e}^{-\left(t/t_0-1\right)^2}$.
	\begin{definition}[Localisation in Time]
		We say a mode is localised in time if it is an $L^2$-solution to the ODE \eqref{eq:v_ODE_st}.
	\end{definition}
	\begin{rem}
		Let $\left(\lambda^{\alpha}_i\right)$ be the absolute values of the eigenvalues of the matrix $\mathcal{M}^{\alpha}(t)$ emerging from the ODE \eqref{eq:v_ODE_st}. For a mode to be an $L^2$-solution it is necessary to look at the values $(\lambda_i^{\alpha})_{i=1}^N$ and check that they are less than $1$.
	\end{rem}
	In Figure \ref{fig:temp_deg_of_loc_space} we present the eigenvalues $\left(\lambda^{\alpha}_i\right)$ of the matrix $\mathcal{M}^{\alpha}(t)$. The modes $i$ associated with $\lambda_i^{\alpha}$ close to zero indicate a decaying mode in time, and the possibility of time-localisation.\par 
	Finally, we obtain space-time localised modes by considering the modes $i^*$ for which $\lambda_{i^*}^{\alpha}$ is close to zero and $d_{i^*}$ is close to one. Let $\mathbf{u}^*(t)$ be the space-time localised mode which solves \eqref{eq:v_ODE_st}. We introduce $d_{*}$ as a function of time:
	\begin{align}
		d_{*}(t):=\frac{||\mathbf{u}^*(t)||_{\infty}}{||\mathbf{u}^*(t)||_2},
	\end{align}
	which we call the \textit{time-dependent degree of localisation}. Note here that the norms are taken over the space variable. Indeed, recall that $\mathbf{u}^*(t)$ is a vector-valued function of time, whose entries each represent the evaluation inside the resonators $D_i^m$ of each cell, thus,  $||\cdot||_{\infty}$ and $||\cdot||_2$ denote the $\ell^{\infty}$- and $\ell^2$-norm, respectively, defined over sequences. Hence, $d_*(t)$ shows the degree of localisation at each time $t\in\mathbb{R}$ and we say a mode is localised in space and time if its corresponding $d_*(t)$ has a distinct peak, such as in Figure \ref{fig:dt_loc}. Having a peak in $d_*(t)$ means that we have a transition from spatial delocalisation into spatial localisation.\par 
    \begin{figure}[H]
		\centering
		\includegraphics[width=0.9\textwidth]{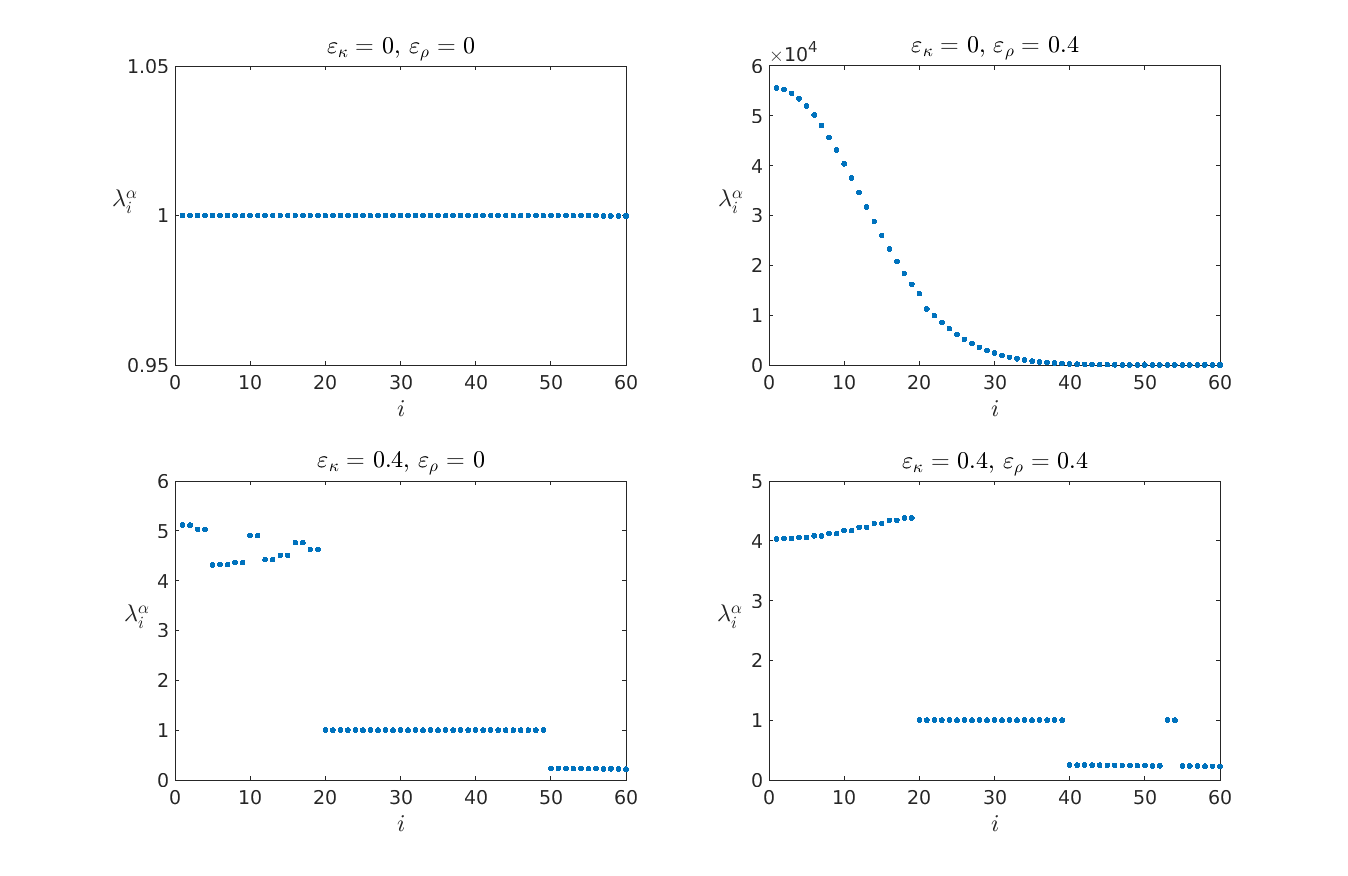}
		\caption{The eigenvalues $\left(\lambda_i^{\alpha}\right)_{i=1}^N$ corresponding to $20$ reoccurring unit cells of $N=3$ resonators, where there is a defect in the periodicity of $\kappa_2(t)$ inside $D^0_2$. Each resonator is of length $\ell=1$ and the spacing between two neighbouring resonators is $\ell_{ij}=2$. Note that in the static case $\lambda_i^{\alpha}\equiv1$, hence, it is impossible to achieve time localisation.}
		\label{fig:temp_deg_of_loc_space}
	\end{figure}
	The results presented in Figure \ref{fig:dt_loc} and Figure \ref{fig:st_loc_snapshots} present a space-time localised wave. Figure \ref{fig:dt_loc} shows that we should expect a spatially localised wave around $t=46.8\,\mathrm{s}$. The dynamics of this solution is shown in Figure \ref{fig:st_loc_snapshots}, and confirming this transition from delocalisation to localisation.
	In summary, Figure \ref{fig:dt_loc} and Figure \ref{fig:st_loc_snapshots} numerically show the existence of space-time localised waves through the introduction of a defect in space and time.\par 
	\begin{figure}[H]
		\centering
		\includegraphics[width=1.0\textwidth]{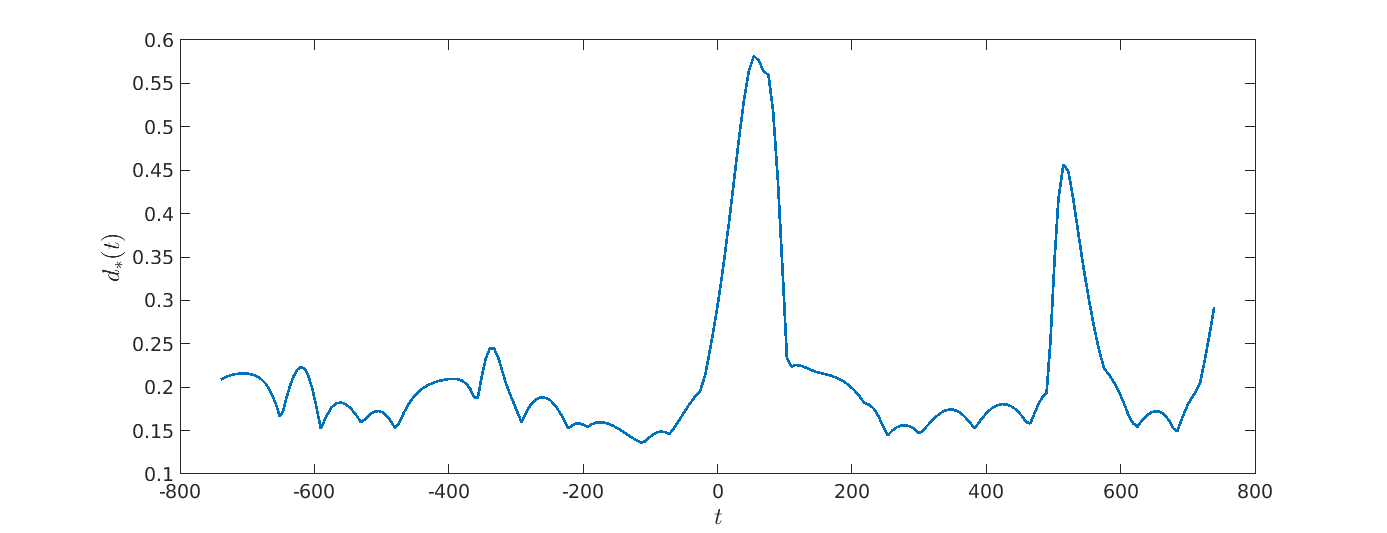}
		\caption{The time-dependent degree of localisation $d_*(t)$ corresponding to $25$ recurrences of a unit cell with $N=3$ resonators of length $\ell=1$ and spacing $\ell_{12}=\ell_{34}=1,\,\ell_{23}=2$. The material parameters are chosen to be $v_0=v_{\mathrm{r}}=1,\,\Omega=0.034,\,\delta=0.0001,\,\varepsilon_{\kappa}=0.4,\,\varepsilon_{\srs}=0.2,\,\alpha=0.01$. The material has a spatial defect at $D_2^0$, where we set $v_{\mathrm{r}}=2$, and a temporal defect at $D_2^m$ of $c_2=1$ with $t_0=T=184.7996\,\mathrm{s}$. }
		\label{fig:dt_loc}
	\end{figure}
	Note that the second peak in Figure \ref{fig:dt_loc} is due to the super-cell approximation for solving the infinite structure.
	\begin{figure}[H]
		\centering
		\includegraphics[width=1.0\textwidth]{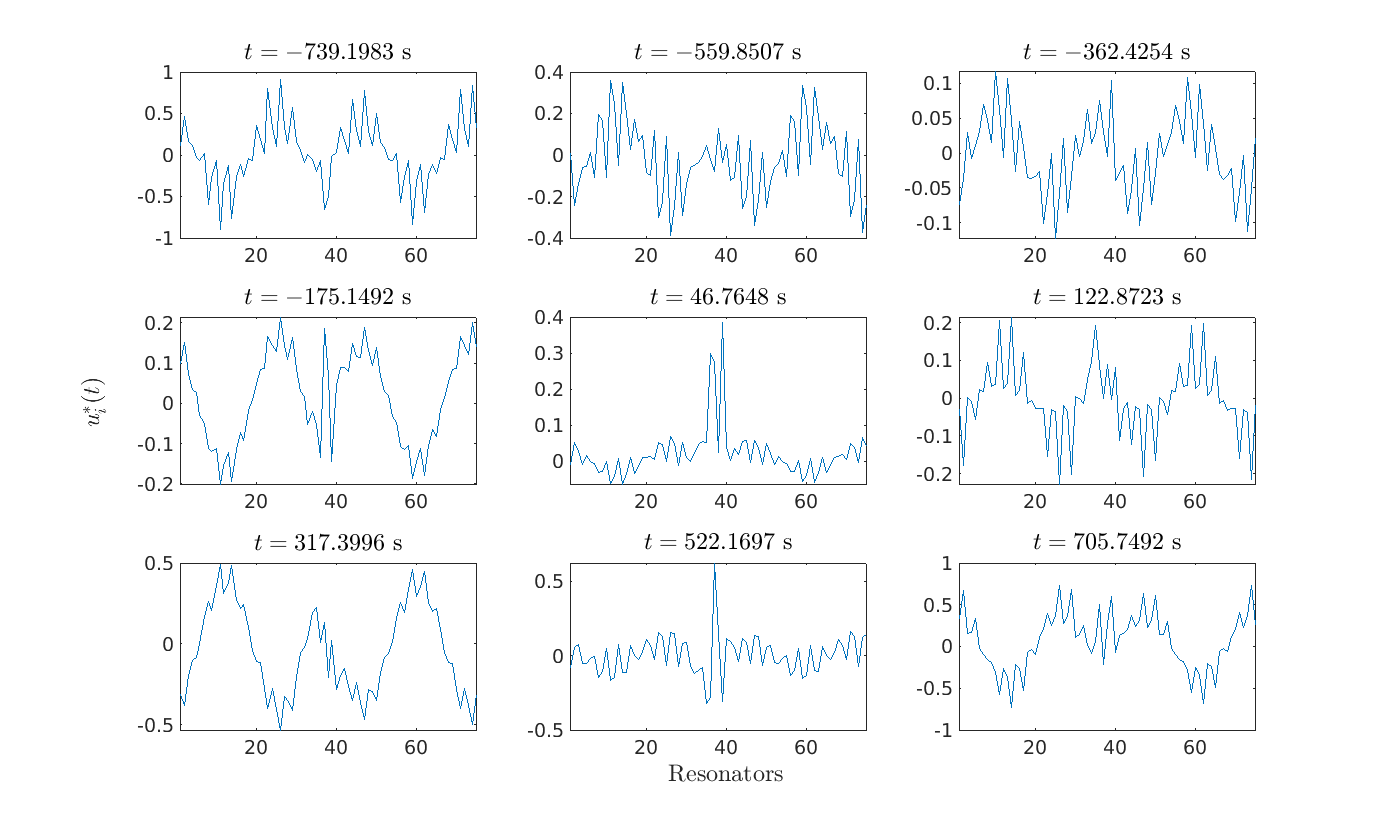}
		\caption{The localised mode corresponding to the same structure as in Figure \ref{fig:dt_loc}. Each subplot shows a snapshot of the mode over time. The fifth subplot at $t=46.7648\,\mathrm{s}$ corresponds to the peak seen in Figure \ref{fig:dt_loc}.}
		\label{fig:st_loc_snapshots}
	\end{figure}
	
	\section{Concluding remarks}\label{sec:concl}
	In this paper, we have proved the existence of space-time localised waves by introducing a defect in space in $\rho$ and a defect in time in $\kappa$. We have first considered waves which are localised in space and derived an approximation formula to the eigenmodes in the form of an ODE including the capacitance matrix $\mathcal{C}$ in \eqref{eq:v_ODE}. Moreover, we have followed the approach in \cite{ammari2023anderson} to formulate a root-finding problem, which requires the computation of the determinant of a Toeplitz matrix formulation in Proposition \ref{prop:toeplitz}.\par 
	Next, we have considered materials with a temporal inhomogeneity by breaking the periodicity of $\kappa_i$ as in \eqref{eq:kappa_nonper}. This has led to the need of rephrasing the wave $u(x,t)$ through the inverse Fourier transform instead of a Fourier series. Analogously to the spatially defected setting, we have derived an ODE approximation formula \eqref{eq:v_timedefect_ODE} to the eigenmodes.\par 
	Lastly, we have assumed the material to have a defect in space and time simultaneously, which gives rise to the ODE approximation formula \eqref{eq:v_ODE_st}. We have solved this ODE numerically, which visualises the existence of space-time localised waves in one dimensional time-varying metamaterials.\par
	Our formulations and  results in this paper can be generalised to systems of subwavelength resonators where sources of energy gain and loss represented by time-modulated complex material parameters are introduced inside the resonators \cite{fink}. They can also be used to provide the mathematical foundations of the non-Hermitian skin effect in systems of subwavelength resonators \cite{silvio_arma} achieved by breaking reciprocity with time-modulation \cite{skin_modulated}. These two very interesting directions of research will be the subjects of forthcoming works.

	\section*{Code availability}
	The codes that were used to generate the results presented in this paper are available under \url{https://github.com/rueffl/localisation_codes}.

	\addcontentsline{toc}{chapter}{References}
	\renewcommand{\bibname}{References}
	\bibliography{refs}

\begin{thebibliography}{10}

\bibitem{silvio_arma}
Habib Ammari, Silvio Barandun, Jinghao Cao, Bryn Davies, and Erik~Orvehed Hiltunen.
\newblock Mathematical foundations of the non-{H}ermitian skin effect.
\newblock {\em Arch. Ration. Mech. Anal.}, 248(3):Paper No. 33, 34, 2024.

\bibitem{jinghao-silvio2023}
Habib Ammari, Silvio Barandun, Jinghao Cao, and Florian Feppon.
\newblock Edge modes in subwavelength resonators in one dimension.
\newblock {\em Multiscale Model. Simul.}, 21(3):964--992, 2023.

\bibitem{jinghao_liora}
Habib Ammari, Jinghao Cao, Erik~Orvehed Hiltunen, and Liora Rueff.
\newblock Transmission properties of time-dependent one-dimensional metamaterials.
\newblock {\em J. Math. Phys.}, 64(12):121502, 12 2023.

\bibitem{ammari2024scattering}
Habib Ammari, Jinghao Cao, Erik~Orvehed Hiltunen, and Liora Rueff.
\newblock Scattering from time-modulated subwavelength resonators.
\newblock {\em Proc. A.}, 480(2289):Paper No. 20240177, 22, 2024.

\bibitem{jinghao2}
Habib Ammari, Jinghao Cao, and Xinmeng Zeng.
\newblock Transmission properties of space-time modulated metamaterials.
\newblock {\em Stud. Appl. Math.}, 150:558--581, 2023.

\bibitem{ammari_convrates}
Habib Ammari, Bryn Davies, and Erik~Orvehed Hiltunen.
\newblock Convergence rates for defect modes in large finite resonator arrays.
\newblock {\em SIAM J. Math. Anal.}, 55(6):7616--7634, 2023.

\bibitem{ammari2023spectralconvergencelargefinite}
Habib Ammari, Bryn Davies, and Erik~Orvehed Hiltunen.
\newblock Spectral convergence in large finite resonator arrays: the essential spectrum and band structure.
\newblock {\em Bull. Lond. Math. Soc., to appear (arXiv:2305.16788)}, 2023.

\bibitem{ammari2023anderson}
Habib Ammari, Bryn Davies, and Erik~Orvehed Hiltunen.
\newblock Anderson localization in the subwavelength regime.
\newblock {\em Comm. Math. Phys.}, 405(1):Paper No. 1, 20, 2024.

\bibitem{ammari.davies.ea2021}
Habib Ammari, Bryn Davies, and Erik~Orvehed Hiltunen.
\newblock Functional analytic methods for discrete approximations of subwavelength resonator systems.
\newblock {\em Pure Appl. Anal.}, 6(3):873--939, 2024.

\bibitem{cbms}
Habib Ammari, Bryn Davies, and Erik~Orvehed Hiltunen.
\newblock {\em Mathematical theories for metamaterials: From condensed matter theory to subwavelength physics}.
\newblock NSF--CBMS Regional Conf. Ser. American Mathematical Society, Providence, 2025, to appear.

\bibitem{francesco}
Habib Ammari, Francesco Fiorani, and Erik~Orvehed Hiltunen.
\newblock On the validity of the tight-binding method for describing systems of subwavelength resonators.
\newblock {\em SIAM J. Appl. Math.}, 82(4):1611--1634, 2022.

\bibitem{mcmpp}
Habib Ammari, Brian Fitzpatrick, Hyeonbae Kang, Matias Ruiz, Sanghyeon Yu, and Hai Zhang.
\newblock {\em Mathematical and Computational Methods in Photonics and Phononics}, volume 235 of {\em Mathematical Surveys and Monographs}.
\newblock {American Mathematical Society, Providence, RI}, 2018.

\bibitem{ammarihiltunen2020edge}
Habib Ammari and Erik~Orvehed Hiltunen.
\newblock Edge modes in active systems of subwavelength resonators.
\newblock {\em arXiv:2006.05719}, 2020.

\bibitem{erik_JCP}
Habib Ammari and Erik~Orvehed Hiltunen.
\newblock Time-dependent high-contrast subwavelength resonators.
\newblock {\em J. Comput. Phys.}, 445:110594, 2021.

\bibitem{2016NatRM_Alu}
Steven~A. {Cummer}, Johan {Christensen}, and Andrea {Al{\`u}}.
\newblock {Controlling sound with acoustic metamaterials}.
\newblock {\em Nature Reviews Materials}, 1(3):16001, March 2016.

\bibitem{florian}
Florian Feppon and Habib Ammari.
\newblock Subwavelength resonant acoustic scattering in fast time-modulated media.
\newblock {\em J. Math. Pures Appl. (9)}, 187:233--293, 2024.

\bibitem{GaliffiYinAlú+2022+3575+3581}
Emanuele Galiffi, Shixiong Yin, and Andrea Al{\`u}.
\newblock Tapered photonic switching.
\newblock {\em Nanophotonics}, 11(16):3575--3581, 2022.

\bibitem{josa2}
Benjamin~M. Goldsberry, Samuel~P. Wallen, and Michael Haberman.
\newblock Acoustic scattering from spatiotemporally modulated domains.
\newblock {\em J. Acoust. Soc. Amer.}, 154(4-supplement):A57--A57, 2023.

\bibitem{josa1}
Benjamin~M. Goldsberry, Samuel~P. Wallen, and Michael~R. Haberman.
\newblock A green’s function approach for nonreciprocal vibrations of finite elastic structures with spatiotemporally modulated material properties.
\newblock {\em J. Acoust. Soc. Amer.}, 150(4-Supplement):A108--A108, 2021.

\bibitem{erik_bryn_3dscattering}
E.~O. Hiltunen and B.~Davies.
\newblock Coupled harmonics due to time-modulated point scatterers.
\newblock {\em Phys. Rev. B}, 110:184102, Nov 2024.

\bibitem{kadic20193d}
Muamer Kadic, Graeme~W Milton, Martin van Hecke, and Martin Wegener.
\newblock {3D} metamaterials.
\newblock {\em Nat. Rev. Phys.}, 1(3):198--210, 2019.

\bibitem{Ma_Sheng_2016}
Guancong Ma and Ping Sheng.
\newblock Acoustic metamaterials: From local resonances to broad horizons.
\newblock {\em Science Advances}, 2(2):e1501595, 2016.

\bibitem{skin_modulated}
Kei Matsushimaa and Takayuki Yamada.
\newblock Non-bloch band theory for time-modulated discrete mechanical systems.
\newblock {\em arXiv:2407.09871}, 2024.

\bibitem{mdpi_ostyn}
Mark Ostyn, Siyong Kim, and Woon-Hong Yeo.
\newblock A simulation study of a radiofrequency localization system for tracking patient motion in radiotherapy.
\newblock {\em Sensors}, 16(4), 2016.

\bibitem{Ramezani_Jha_Wang_Zhang}
Hamidreza Ramezani, Pankaj~K. Jha, Yuan Wang, and Xiang Zhang.
\newblock Nonreciprocal localization of photons.
\newblock {\em Phys. Rev. Lett.}, 120:043901, Jan 2018.

\bibitem{Sharabi_Lustig_Segev}
Yonatan Sharabi, Eran Lustig, and Mordechai Segev.
\newblock Disordered photonic time crystals.
\newblock {\em Phys. Rev. Lett.}, 126:163902, Apr 2021.

\bibitem{THOMES_spacetimeloc}
Renan~Lima Thomes, Danilo Beli, and Carlos {De Marqui}.
\newblock Space–time wave localization in electromechanical metamaterial beams with programmable defects.
\newblock {\em Mechanical Systems and Signal Processing}, 167:108550, 2022.

\bibitem{fink}
Xinhua Wen, Xinghong Zhu, Alvin Fan, Wing~Yim Tam, Jie Zhu, Hong~Wei Wu, Fabrice Lemoult, Mathias Fink, and Jensen Li.
\newblock Unidirectional amplification with acoustic non-hermitian space-time varying metamaterial.
\newblock {\em Commun. Phys.}, 5, 2022.

\end{thebibliography}
	\bibliographystyle{plain}

\end{document}